\documentclass[12pt,a4paper]{article}

\usepackage[utf8]{inputenc}
\usepackage[T1]{fontenc}
\usepackage{amsmath,amssymb,amsthm,amsfonts} 
\usepackage{mathtools} 
\mathtoolsset{showonlyrefs=true}
\usepackage[left=1.2in, right=1.2in, top=1in, bottom=1in]{geometry}
\usepackage{graphicx}  
\usepackage{subcaption}  
\usepackage{float}
\usepackage{hyperref,url}
\usepackage{cite}      
\usepackage{color}
\usepackage{xcolor}     
\usepackage{bm}        
\usepackage{authblk}
\usepackage[normalem]{ulem}
\usepackage{fullpage}
\usepackage[shortlabels]{enumitem}

\newtheorem{theorem}{Theorem}
\newtheorem*{theorem*}{Theorem}
\newtheorem*{proposition*}{Proposition}
\newtheorem{lemma}[theorem]{Lemma}
\newtheorem{corollary}[theorem]{Corollary}

\newtheorem{proposition}[theorem]{Proposition}
\newtheorem{remark}[theorem]{Remark}

\newcommand{\p}{\partial}
\newcommand{\R}{\mathbb{R}}
\newcommand{\N}{\mathbb{N}}
\newcommand{\C}{\mathbb{C}}
\renewcommand{\S}{\mathbb{S}}
\renewcommand{\sc}{\mathrm{sc}}
\newcommand{\inc}{\mathrm{inc}}
\newcommand{\eps}{\varepsilon}

\newcommand{\ii}{\mathrm{i}}
\DeclareMathOperator{\sinc}{sinc}
\newcommand{\supp}{\operatorname{supp}}
\newcommand{\comp}{\mathrm\comp}

\title{Reconstruction for an inverse scattering problem with a Kerr type nonlinearity}
\author[1]{Khaoula El Maddah}
\author[2]{Matti Lassas}
\author[3]{Tony Liimatainen}
\author[1]{Valter Pohjola}
\author[1]{Teemu Tyni}

\affil[1]{Research Unit of Applied and Computational Mathematics,
University of Oulu, Finland}
\affil[2]{Department of Mathematics and Statistics, University of Helsinki, Finland}
\affil[3]{Department of Mathematics and Statistics, University of Jyväskylä, Finland}
\date{}

\begin{document}

\maketitle

\begin{abstract}
We study the inverse scattering problem for the Kerr-nonlinear Helmholtz equation
\[
\Delta u + k^2(1+q(x)|u|^2)u = 0 \quad \text{in }\R^n,\; n\geq 2,
\]
where the aim is to recover the unknown potential $q$ from the scattering amplitude. We obtain uniqueness for full  data and partial data cases of  backscattering, fixed angle scattering, and fixed energy scattering.  For the linear Helmholtz equation, uniqueness in backscattering and fixed angle cases are classical and largely open problems. We are able to explicitly reconstruct individual Fourier modes of the potential, and if the measured directions and energies cover an open subset, we recover $q$.
The  simplicity of the approach leads to an efficient numerical method, and numerical experiments show accurate reconstructions, even in the presence of noise.

\end{abstract}


\section{Introduction}\label{sec_intro}

We study the inverse scattering problem for the Kerr-type nonlinear Helmholtz equation
\begin{equation}\label{eq_kerr_problem_intro}
 \Delta u + k^2\bigl(1+q(x)|u|^2\bigr)u = 0 \quad \text{in }\R^n,
\end{equation}
where $k>0$ is the wavenumber and $q$ is a compactly supported function describing the unknown potential. The total field is decomposed as $u = u_\inc + u_\sc$, where the incident field $u_\inc$ satisfies the homogeneous Helmholtz equation and the scattered field $u_{\sc}$ satisfies the Sommerfeld radiation condition
\begin{equation*}
\lim_{r\to\infty} r^{\frac{n-1}{2}}
\bigl(\partial_{r}-ik\bigr)u_{\sc}(x)=0,
\end{equation*}
where $r = |x|$.
For a sufficiently small incident wave, the scattering problem admits a unique radiating solution (see Section \ref{sec: direct_pro_frechet_diff}), and the scattered field has the unique asymptotic expansion
\begin{equation}\label{eq:farfield}
u_{\sc}(r,\hat \theta \,;\theta) = C_{k,n} \frac{e^{ikr}}{ r^\frac{n-1}{2}} A(k, \hat \theta,\theta) + o\bigl( r^\frac{-n+1}{2} \bigr), \quad r \to \infty,
\end{equation}
uniformly in $\hat \theta = x/|x|\in\S^{n-1}$, where $A(k,\hat\theta,\theta)$ is called the scattering amplitude. The classical inverse scattering problem is to recover the potential $q$ from knowledge of the scattering amplitude. 

The scattering amplitude depends on the incident direction $\theta$, the observation direction $\hat\theta$, and the wavenumber $k$ (and of course on the unknown potential $q$). We are particularly interested in partial data cases where only a subset of the full data is available, i.e., when $A(k,\hat\theta,\theta)$ is known only for $k\in U$, $\theta\in V$, and $\hat\theta\in W$, where $U\subset\mathbb{R}_+$ and $V,W\subset\S^{n-1}$. Important examples include backscattering ($\hat\theta=-\theta$), fixed angle scattering (either the incident direction $\theta$ or the observation direction $\hat\theta$ is fixed while the other varies), and fixed energy scattering ($k$ fixed).

Equation \eqref{eq_kerr_problem_intro} arises in nonlinear optics as a model for
monochromatic wave propagation in a Kerr medium, where the refractive index $n$
depends on the optical intensity as 
\[
n = n_0 + n_2 |E|^2.
\]
This intensity dependence leads to phenomena such as self-focusing of light beams.
We briefly outline the physical background here and refer to \cite{Boyd2008},
especially Section 4.1, for further details.

To see how the equation emerges, consider an electric field linearly polarized along the $x$-axis, $\mathbf E = (E,0,0)$, in a lossless, isotropic medium. The displacement field is $\mathbf D = \epsilon\mathbf E + \mathbf P^{\mathrm{NL}}$, where $\epsilon$ is a linear permittivity and $\mathbf P^{\mathrm{NL}} = (P^{\mathrm{NL}},0,0)$ is the nonlinear polarization, which gives the dipole moment density of the electric field, the Maxwell's equations give us the wave equation
\begin{equation}\label{eq:Maxwell_wave}
(-\Delta + \epsilon\partial_t^2)E = \partial_t^2 P^{\mathrm{NL}}.
\end{equation} 
The Kerr nonlinearity in equation \eqref{eq_kerr_problem_intro} arises from third order
interactions.
In centrosymmetric media such as silica glass, the nonlinear polarization satisfies the symmetry $P^{\mathrm{NL}}(-E) = -P^{\mathrm{NL}}(E)$. Expanding the nonlinear polarization as a power series in the electric field, the symmetry  forces all even-order terms to vanish. This leaves the third-order term as the dominant nonlinearity, so $P^{\mathrm{NL}} \approx \chi^{(3)} E^3$. 
We assume that $\chi^{(3)}$ is independent of time. For a monochromatic field $E(x,t) = \operatorname{Re}[E(x)e^{i\omega t}]= \frac{1}{2}(E e^{i\omega t} + \overline{E} e^{-i\omega t})$. Substituting this into \eqref{eq:Maxwell_wave} and equating the coefficients of $e^{i\omega t}$, the left side becomes $(\Delta + \epsilon\omega^2)E$. For the right side, expanding $E(x,t)^3$ produces terms at frequencies $e^{\pm i\omega t}$ and $e^{\pm 3i\omega t}$.
An intensity dependent refractive index is an effect that is not due to the third harmonic $e^{\pm 3i \omega t}$ terms, 
and the polarization $P^{NL}$ is modeled by the remaining 
$\frac{3}{8}\omega^2 \chi^{(3)}|E|^2 E e^{i \omega t}$ term.

Thus \eqref{eq:Maxwell_wave} yields
\[
(\Delta + \epsilon\omega^2)E = \frac{3}{8}\omega^2 \chi^{(3)} |E|^2 E,
\]
which is \eqref{eq_kerr_problem_intro} after setting $k^2 = \epsilon\omega^2$ and absorbing the constants and $\chi^{(3)}(x)$ into $q(x)$.

\subsection{Main results}

The main contributions of this work are twofold. First, by linearizing the scattering amplitude corresponding to the incident wave
\begin{equation*}
u_{\inc}(x) = \varepsilon e^{ik\theta\cdot x},
\qquad \theta\in\S^{n-1},\; \varepsilon>0,
\end{equation*}
to third order in the amplitude parameter $\varepsilon$,  we obtain an explicit relation for the Fourier transform $\mathcal{F}(q)$ of the unknown potential $q$:
\begin{equation}\label{eq:third_linearization_intro}
D_\varepsilon^3 A(k, \hat{\theta}, \theta;\eps)\big|_{\varepsilon=0} = 6\,\mathcal{F}(q)\bigl(k(\hat{\theta}-\theta)\bigr).
\end{equation}
This formula gives direct access to the Fourier data of $q$ in several physically relevant measurement configurations, including backscattering ($\hat\theta=-\theta$), fixed angle scattering, and fixed energy scattering ($k$ fixed). When this identity is known for all $k>0$ and all $\hat\theta\in\S^{n-1}$ with a fixed incident direction $\theta$, it yields a reconstruction formula for $q$ via Fourier inversion. The symmetric case with a fixed observation direction $\hat\theta$ and varying $\theta$ is analogous. Moreover, any open subset of frequencies suffices for the full recovery of $q$ by analytic continuation, meaning that the inverse problem is uniquely solvable under very mild measurement requirements.

Our main theoretical result is the following.

\begin{theorem}\label{thm: main theorem}
Let $q\in L^p(\R^n)$, with $p>\max\{2,n/2\}$, be compactly supported. Let $U\subset \R_+$ and $V,W\subset\mathbb{S}^{n-1}$. Define the set of accessible frequencies
\[
M \vcentcolon= \{\xi\vcentcolon=k(\hat \theta- \theta)\mid k\in U, \theta\in V, \hat \theta\in W\}\subset \R^n.
\]
Then the nonlinear scattering amplitude
\[
A(k,\hat\theta,\theta; \eps),\quad u_{\inc}=\eps e^{ik\theta\cdot x},
\]
for all sufficiently small $\eps>0$, all $k\in U$, $\theta\in V$, and $\hat \theta\in W$, uniquely determines $\mathcal{F}(q)(\xi)$ for all $\xi\in M$. If $M$ contains an open subset, then these scattering data uniquely determine $q$.
\end{theorem}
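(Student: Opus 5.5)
The plan is to derive the third-order linearization identity \eqref{eq:third_linearization_intro} from the well-posedness and Fréchet differentiability of the forward problem (Section \ref{sec: direct_pro_frechet_diff}). Then $\mathcal F(q)$ is read off on $M$, and an open subset of $M$ is upgraded to all of $q$ by analyticity of the Fourier transform.

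\textbf{Step 1: the Lippmann--Schwinger fixed point.} For small $\eps$, write $u=\eps e^{ik\theta\cdot x}+u_\sc$ with
\[
u_\sc = k^2 G_k\ast\bigl(q|u|^2u\bigr),
\]
where $G_k$ is the outgoing fundamental solution. Using $q\in L^p$ with $p>\max\{2,n/2\}$ and compact support, the map $v\mapsto k^2G_k\ast(q|v|^2 v)$ should be a contraction on a small ball of $L^\infty(B)$, or of a suitable Sobolev space on a ball $B\supset\supp q$. This rests on the mapping properties of $G_k\ast$ from $L^{p/3}$-type spaces, together with Hölder's inequality. It gives a unique solution, real-analytic (as a real map) in $\eps$, with $u=\eps u_1+O(\eps^3)$ and $u_1=e^{ik\theta\cdot x}$. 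Hence
\[
u_\sc=\eps^3 k^2\, G_k\ast\bigl(q\,|u_1|^2u_1\bigr)+O(\eps^5)
\]
in the relevant norm. Here $|u_1|^2=1$, so the cubic source is simply $q\,e^{ik\theta\cdot x}$.

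\textbf{Step 2: far field.} The scattering amplitude is obtained from the far-field asymptotics of $G_k$:
\[
A(k,\hat\theta,\theta;\eps)=c\int e^{-ik\hat\theta\cdot y}\,k^2 q(y)\,|u|^2u(y)\,dy ,
\]
with $c$ normalized consistently with $C_{k,n}$ in \eqref{eq:farfield}. Since the map $\eps\mapsto q|u|^2u\in L^1(B)$ is smooth, $A$ is smooth in $\eps$. Its third derivative at $0$ is $6\int e^{-ik\hat\theta\cdot y}q(y)e^{ik\theta\cdot y}dy$, which equals $6\mathcal F(q)(k(\hat\theta-\theta))$ with the paper's normalization. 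The lower-order terms vanish: $D_\eps A|_{0}=D^2_\eps A|_{0}=0$ because the expansion is cubic in $\eps$. Thus knowing $A$ for all small $\eps$ determines $\mathcal F(q)(\xi)$ for every $\xi\in M$.

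\textbf{Step 3: uniqueness.} Since $q$ is compactly supported and in $L^p\subset L^1_{loc}$, we have $q\in L^1$. By Paley--Wiener, $\mathcal F(q)$ extends to an entire function on $\C^n$, so it is real-analytic on $\R^n$. If $M$ contains an open set, the identity theorem determines $\mathcal F(q)$ everywhere, and Fourier inversion (injectivity on tempered distributions) recovers $q$.

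\textbf{Main obstacle.} I expect Step 1 to be the hardest: justifying the fixed-point argument and the differentiability of $A$ in $\eps$ under only $q\in L^p$, $p>\max\{2,n/2\}$, with remainders uniform enough to commute $D_\eps^3$ with the far-field limit. I would first try to work in $L^\infty(B)$ or $H^2(B)$, using $H^2\subset L^\infty$ for $n\le3$. For general $n$, I would instead use the $L^{r}\to L^{r'}$ resolvent bounds (Kenig--Ruiz--Sogge type), with $|u|^2u\in L^{s}$ controlled via Hölder. If this is already established in Section \ref{sec: direct_pro_frechet_diff}, I would simply invoke it.
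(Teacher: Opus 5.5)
Your proposal is correct and follows essentially the same route as the paper. You solve the Lippmann--Schwinger equation in $L^\infty(B)$ for small $\eps$; the paper uses the implicit function theorem where you use a contraction, based on the bound $\mathcal G_k: L^\infty(B)\to L^\infty(B)$ for $p>n/2$. You then differentiate $A=\int e^{-ik\hat\theta\cdot y}q|u|^2u\,dy$ three times at $\eps=0$ to get $6\,\mathcal F(q)(k(\hat\theta-\theta))$, and conclude by Paley--Wiener--Schwartz analyticity and analytic continuation. Your worry about commuting $D_\eps^3$ with the far-field limit is unnecessary, since, as you yourself do in Step 2, $A$ is differentiated directly through its integral formula over $B$; likewise the stray factor $c\,k^2$ in your amplitude is only a known nonzero normalization constant and does not affect uniqueness.
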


\begin{remark}
It suffices to consider $\varepsilon>0$. The scattering amplitude depends continuously on $\varepsilon$, and $A(k,\hat\theta,\theta;0)=0$. Moreover, equation \eqref{eq_kerr_problem_intro} is invariant under $u_\inc \mapsto -u_\inc$, since if $u$ is a solution corresponding to the incident field $u_\inc$, then $-u$ is a solution corresponding to $-u_\inc$. Hence the resulting scattering amplitude is unchanged.
\end{remark}

As a direct consequence, we obtain uniqueness in the physically important measurement configurations mentioned above.

\begin{corollary}\label{cor_main}
Under the assumptions of Theorem~\ref{thm: main theorem}, the mapping $u_{\inc}\mapsto A$ uniquely determines $q$ in the following cases:
\begin{enumerate}
    \item \textbf{Full data:} if $U=(0,\infty)$ and $V=W=\S^{n-1}$, then $M=\R^n\setminus\{0\}$.
    \item \textbf{Backscattering:} if $U=(k_1,k_2)$ for some $0\leq k_1<k_2 < \infty$ and $\hat \theta=-\theta\in \S^{n-1}$, then $M=\{\xi\in\R^n\mid 2k_1< |\xi|<2k_2\}$.
    \item \textbf{Fixed energy:} if $k=k_0\in (0,\infty)$ is fixed and $V=W=\S^{n-1}$, then $M=\{\xi\in\R^n\mid |\xi|\leq 2k_0\}$.
    \item \textbf{Fixed angle:} if $\theta=\theta_0\in \S^{n-1}$ (resp.\ $\hat \theta=\theta_0\in \S^{n-1}$) is fixed, and $U=(k_1,k_2)$ for some $0\leq k_1<k_2\leq \infty$, then $M$ is a (truncated) cone. Thus, if $U$ contains an open interval and $W$ (resp.\ $V$) has a relatively open subset on the sphere, then $M$ contains an open subset.
\end{enumerate}
\end{corollary}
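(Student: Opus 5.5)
The plan is to derive each item directly from Theorem~\ref{thm: main theorem}. We compute the accessible set $M$ for each configuration and check that it contains a nonempty open subset of $\R^n$. Uniqueness of $q$ then follows from the last sentence of that theorem: $q$ is compactly supported and lies in $L^p$, so $q\in L^1$ and $\mathcal{F}(q)$ is real-analytic (indeed entire) by Paley--Wiener. Its values on an open set therefore determine it everywhere, and Fourier inversion recovers $q$. So the whole task is elementary geometry of $\xi=k(\hat\theta-\theta)$.

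\textbf{Full data.} Any $\xi\neq 0$ can be written as $\xi=k(\hat\theta-\theta)$ with $\hat\theta=-\theta=\xi/|\xi|$ and $k=|\xi|/2$. Conversely, $\xi=0$ requires $\hat\theta=\theta$, which is harmless. Hence $M\supseteq\R^n\setminus\{0\}$, which is open.

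\textbf{Backscattering.} Here $\xi=-2k\theta$ with $k\in(k_1,k_2)$ and $\theta\in\S^{n-1}$. This gives exactly the annulus $2k_1<|\xi|<2k_2$, which is open and nonempty.

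\textbf{Fixed energy.} The vector $\hat\theta-\theta$ ranges over the closed ball of radius $2$, at least when $n\ge2$. Indeed, for any target $\eta$ with $|\eta|\le 2$ we choose $\theta,\hat\theta$ symmetric about the hyperplane orthogonal to $\eta$: set $\hat\theta=\eta/2+s\nu$ and $\theta=-\eta/2+s\nu$, where $\nu\perp\eta$ is a unit vector and $s=\sqrt{1-|\eta|^2/4}$. The existence of such $\nu$ uses $n\ge 2$. Therefore $M=\{|\xi|\le 2k_0\}$, which has nonempty interior.

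\textbf{Fixed angle.} With $\theta=\theta_0$ fixed, $M=\{k(\hat\theta-\theta_0): k\in U,\ \hat\theta\in W\}$. The set $\{\hat\theta-\theta_0\}$ is the unit sphere translated to pass through $0$, and scaling by $k$ sweeps out a cone-like region. This is the main step requiring care. We need to show that the map $\Phi(k,\hat\theta)=k(\hat\theta-\theta_0)$ is a local diffeomorphism from $(0,\infty)\times\S^{n-1}$ to $\R^n$ at points with $\hat\theta\neq\theta_0$. Then, by the inverse function theorem, the image of an open set $I\times W'$ contains an open set.

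To verify this we compute the differential. Its range is spanned by $\partial_k\Phi=\hat\theta-\theta_0$ together with $kT_{\hat\theta}\S^{n-1}=k\,\hat\theta^\perp$. It is surjective if and only if $\hat\theta-\theta_0\notin\hat\theta^\perp$, i.e.\ if and only if $\hat\theta\cdot(\hat\theta-\theta_0)=1-\hat\theta\cdot\theta_0\neq0$, i.e.\ $\hat\theta\neq\theta_0$. Since $W$ contains a relatively open set, we may choose $\hat\theta\in W$ with $\hat\theta\neq\theta_0$ (and shrink the open set to avoid $\theta_0$). Hence $M$ contains an open subset. The case where $\hat\theta=\theta_0$ is fixed and $\theta$ varies is identical, with $\Phi(k,\theta)=k(\theta_0-\theta)$.

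Finally we describe the shape. The set $\{t(\hat\theta-\theta_0):t>0\}$ consists of rays lying in the open half-space $\{\xi\cdot\theta_0<0\}$, which gives the (truncated) cone claimed when $U=(k_1,k_2)$.
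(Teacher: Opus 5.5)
Your proposal is correct and follows the paper's route: the paper says only that the corollary follows directly from Theorem~\ref{thm: main theorem}, and you fill in the elementary computation of $M$ for each configuration, including the inverse-function-theorem check in the fixed-angle case that the paper leaves implicit. You are also right that $\xi=0=k(\theta-\theta)$ is attained in the full-data case, so there $M=\R^n$ and the stated equality $M=\R^n\setminus\{0\}$ is slightly inaccurate; the containment you prove is all that uniqueness needs.
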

We note that, in contrast to the linear Helmholtz equation, where these cases are either open in general or are proved through substantially deeper techniques related to the Calder\'on problem, our nonlinear approach yields a direct and elementary reconstruction in all these configurations. A more detailed discussion of the linear problem is given in the earlier works section below.

Second, we present a two-dimensional numerical framework for the reconstruction of $q$ from scattering data. The forward problem is solved via a Vainikko-type Lippmann--Schwinger formulation combined with Newton iteration. The Fourier data of $q$ is extracted using \eqref{eq:third_linearization_intro} from the third derivative of the scattering amplitude in the backscattering, fixed angle, and fixed energy measurement configurations. The inversion from this Fourier data is performed using Tikhonov regularization for smooth potentials and total variation regularization for discontinuous ones. The numerical experiments demonstrate that the approach produces accurate and stable reconstructions, even in the presence of noise.

\subsection{Earlier works}

Inverse scattering for the linear Helmholtz equation $\Delta u + k^2(1 + q(x))u = 0$ is a classical topic in applied mathematics. The problem of recovering the potential $q$ from far-field measurements has been studied extensively, and comprehensive treatments can be found in the monographs of Colton and Kress \cite{CK19}, Kirsch and Grinberg \cite{KirschGrinberg2008}, and Cakoni and Colton \cite{CakoniColton2014}.

For the linear equation, the status of the inverse problem depends heavily on the measurement configuration. For the fixed energy case, where all incident and observation directions are available at a fixed wavenumber $k$, uniqueness was proved by Nachman \cite{Nachman1992} and Novikov \cite{novikov} using methods that reduce the scattering problem to the Calder\'on problem. However, in partial data cases the situation is very different. The backscattering problem, where only $\hat\theta=-\theta$ is measured, is open in general and only partial results such as recovery of singularities are known \cite{GreenleafUhlmann,Ola,Paivarinta,SerovSandhu2010}. Similarly, the fixed angle scattering problem is largely open, with results again limited to the recovery of singularities \cite{SerovFixed}. For the time-dependent linear wave equation, recent progress on fixed angle scattering was made by Oksanen, Rakesh, and Salo \cite{Oksanen}.

For scattering problems with nonlinear models, one of the earliest uniqueness results was obtained by Jalade \cite{Jalade2004}, who considered a nonlinear Helmholtz equation in $\mathbb{R}^3$ and proved that the coefficient can be uniquely reconstructed from the scattering amplitude. For semilinear Schr\"odinger equations, which are closely related, inverse scattering was investigated by Serov and collaborators, see, e.g., \cite{SerovSandhu2010}. Nonlinear backscattering and fixed angle scattering were also studied in \cite{FotopoulosNonlinearFixedBack}, where recovery of singularities was proved.

The point of view that  nonlinearity of the equations may help in solving the inverse problems was
introduced in  \cite{KurLasUhl}. In this paper, 
it was shown that local measurements for the scalar wave equation with a quadratic nonlinearity determine the conformal class of a globally hyperbolic four-dimensional Lorentzian manifold. This result showed that nonlinearity can be exploited as a tool in inverse problems, as the analogous question for the linear wave equation remains open. 
The core idea is that higher-order Fr\'echet derivatives of the measurement operator with respect to the data encode information about the medium that is not accessible from the linearized problem alone.

Shortly after \cite{KurLasUhl}, the higher-order linearization method was adapted to elliptic equations. Feizmohammadi and Oksanen \cite{FeizmohammadiOksanen2020} and the related paper \cite{LLLS21} considered inverse problems for semilinear elliptic equations on Riemannian manifolds and in $\R^n$. 
Both works demonstrated that the method of \cite{KurLasUhl} extends well beyond the original hyperbolic setting. The partial data case, which is analogous to the limited measurement setups considered in the present paper, was subsequently addressed in \cite{krupchyk2020remark,lassas2020partial}. The method has also been applied to inverse scattering for nonlinear wave equations on Lorentzian manifolds \cite{Alexakis,Hintz,SaBarreto1,SaBarreto2}. These developments have attracted considerable attention and have established the higher-order linearization method as a widely used tool in the study of inverse problems for nonlinear equations. The present work can be seen as a counterpart of this line of research in the setting of inverse scattering in the frequency domain.  

Two contributions in inverse scattering are most closely related to the present work. The first is due to Furuya~\cite{Furuya2020}, who studied an inverse scattering problem for a semilinear Schr\"odinger equation with a general nonlinearity $a(x,u)$, assumed analytic in $u$, and proved that the scattering amplitude uniquely determines the nonlinearity. In that work, the wavenumber $k$ is fixed and the proof relies on complex geometric optics solutions together with a unique continuation argument to reduce the problem to an integral identity on a ball. Moreover, the recovery requires full scattering data, i.e., measurements for all incident and observation directions. In contrast, we vary $k$ to obtain a larger set of accessible frequencies, consider several limited data configurations such as backscattering and fixed angle scattering, and our reconstruction formula follows directly from the third-order linearization using only plane wave incident fields. No complex geometric optics solutions or unique continuation (in the full data case) are required, which is likely advantageous for stability and simplifies the numerical implementation.

The second is by Griesmaier, Kn\"oller, and Mandel \cite{GriesmaierKnoellerMandel2022}, who studied the same model as the present paper, namely the Kerr-type nonlinear Helmholtz equation $\Delta u + k^2(1 + q(x,|u|))u = 0$ with a generalized nonlinear refractive index. By linearizing the nonlinear far-field operator around zero and using the classical uniqueness result for the linear inverse medium scattering problem, they recursively determined the coefficients of the nonlinearity. They also generalized the factorization method and the monotonicity method to the nonlinear setting for shape reconstruction, and provided numerical examples. 
The factorization and monotonicity methods developed there are aimed at shape reconstruction, whereas our approach yields direct reconstruction of the coefficient $q$ itself.
Moreover, our uniqueness result requires only an open subset of accessible frequencies, which is a substantially weaker measurement condition.

\subsection{Organization
}
\medskip
\noindent
The paper is organized as follows. The main uniqueness results and the Fourier relation are proved in Section~\ref{sec: kerr_helmholtz}. Section~\ref{sec: direct_pro_frechet_diff} contains the well‑posedness and Fr\'echet differentiability  of the forward problem. The numerical reconstruction framework is described in Section~\ref{sec: numerics_frame}, with examples in Section~\ref{sec: numerical_examples}.

\section{Proof of the main results} \label{sec: kerr_helmholtz}

In this section, we present the proof of Theorem~\ref{thm: main theorem}
and briefly discuss its consequences. 
Let us begin by restating the problem introduced in section \ref{sec_intro}. We consider
the nonlinear scattering problem for the scalar Helmholtz equation with a Kerr-type contrast $q$,

\begin{equation}\label{eq_kerr_problem}
\begin{cases}
\Delta u + k^2\bigl(1+q(x)|u|^2\bigr)u = 0,
& x\in \mathbb{R}^n, \\[2mm]
u = u_\inc + u_\sc, \\[2mm]
u_{\inc}(x)=\varepsilon e^{ik\theta\cdot x}, 
& \theta\in\mathbb{S}^{n-1},\ k>0, \\[2mm]
\displaystyle
\lim_{|x|\to\infty} |x|^{\frac{n-1}{2}}
\left(\frac x{|x|}\cdot \nabla-ik\right)u_\sc(x)=0.
\end{cases}
\end{equation}
Here $u_{\inc}$ denotes the incident plane wave of amplitude $\varepsilon>0$,
and $u_\sc$ is the scattered field satisfying the Sommerfeld radiation condition.

Let $q\in L^p_\mathrm{comp}(\mathbb{R}^n)$ with
 $p>\max\{2,n/2\}$.
Then, for solutions $u$ with $u_\sc$ in an appropriate weighted Sobolev space $H^2_{-\delta}(\R^n)$ (see definition \eqref{eq_weighted_Sobolev} and Proposition~\ref{prop_well_posed_and_Frechet_Rn}),
problem \eqref{eq_kerr_problem} can be reformulated equivalently
as the nonlinear Lippmann-Schwinger integral equation
\begin{equation}\label{eq_lippmann_schwinger}
u(x)=u_\inc(x)
+\int_{\mathbb{R}^n} G_k(|x-y|)\,k^2 q(y)\,|u(y)|^2u(y)\,dy,
\end{equation}
where $G_k$ denotes the outgoing radiating fundamental solution  of the Helmholtz operator
$-\Delta-k^2$, see 	\eqref{eq_Green_a}. Using the asymptotics of the Green's function
and \eqref{eq_lippmann_schwinger}, one can derive the asymptotic expansion
$$
u_{\sc}(r,\hat \theta \,;\theta) = C_{k,n} \frac{e^{ikr}}{ r^\frac{n-1}{2}} A(k, \hat \theta,\theta;\eps) + o\big( r^\frac{-n+1}{2} \big), \quad \text{ as } r \to \infty,
$$
where we used polar coordinates $x=(r,\hat\theta)$, and where $A(k,\hat \theta,\theta;\eps)$ is called the scattering amplitude, given by
\begin{equation*}
A(k, \hat \theta,\theta;\eps)=\int_{\R^n}e^{-ik \hat \theta\cdot y}q(y)|u(y)|^2u(y)dy.
\end{equation*}

\medskip

We will use linearization to analyze the differential equation in \eqref{eq_kerr_problem}.
Taking the first derivative in $\eps$ of the Helmholtz equation in \eqref{eq_kerr_problem}
gives us
$$
\Delta u_\eps + k^2 u_\eps + k^2q(x)(u^2\overline{u_\eps}+2|u|^2u_\eps) = 0,
$$
where $u_\eps := D_\eps u$ is the Fr\'echet derivative.
It follows that the first linearization $v:=D_\eps u|_{\eps=0}$ of $u$ with respect to $\eps$ at $\eps=0$
satisfies
$$
\Delta v + k^2v = 0, \quad \text{ in } \R^n,
$$
which follows from the previous equation, since $u=0$, when $\eps = 0$. Due to the Sommerfeld radiation condition and the Rellich uniqueness theorem (see Lemma 2.12 in \cite{CK19})
we have that $v_{sc}=0$, and thus $v=e^{ik\theta\cdot x}$. By a similar computation one can work out the second 
linearization $D^2_\eps u |_{\eps=0}$.

Throughout this paper, we adopt the Fourier transform
\[
\mathcal F (u)(\xi)
:= \int_{\mathbb R^n} u(x)e^{-i x\cdot \xi}\,dx
\]
defined on the Schwartz class $\mathcal S(\R^n)$.
For this normalization, the inverse Fourier transform is given by
\[
\mathcal F^{-1}u(x)
=
(2\pi)^{-n}
\int_{\mathbb R^n} u(\xi)e^{i x\cdot \xi}\,d\xi.
\]
The Fourier transform extends by duality to the space of tempered distributions \(\mathcal S'(\mathbb R^n)\), and we use the same notation for the extension.

Now we consider the derivatives of the scattering amplitude in $\eps$ (see Proposition~\ref{prop_frechet_for_A}).
Since the nonlinearity is cubic, the first nontrivial contribution
to the scattering amplitude arises at third order. More precisely,
by a straightforward computation, which again uses the fact that $u|_{\eps=0}=0$,
one obtains
\begin{equation}
D_\eps^3 A(k, \hat{\theta}, \theta;\eps)\big|_{\eps=0}
=
6\int_{\R^n}e^{-ik\hat{\theta}\cdot y} q(y)|v|^2v \,dy, \quad v(x)=e^{ik\theta\cdot x}.
\end{equation}
Substituting the explicit expression for $v$, we compute
\begin{equation} \label{third_linarization_amp}
\begin{aligned}
    D_\eps^3 A(k, \hat{\theta}, \theta;\eps)\big|_{\eps=0} &=
6\int_{\R^n} q(y) e^{-ik(\hat{\theta} - \theta)\cdot y} dy\\
&= 6\, \mathcal F(q)(k(\hat{\theta}-\theta)).
\end{aligned}
\end{equation}
Consequently, the third-order linearization of the scattering amplitude
provides direct access to the Fourier data of the potential function $q$. We
can prove that the scattering amplitude related to \eqref{thm: main theorem}
uniquely determines $q$, if the set of accessible frequencies has an interior.

\begin{proof}[Proof of Theorem~\ref{thm: main theorem}]
The third linearization \eqref{third_linarization_amp} is justified by Proposition~\ref{prop_frechet_for_A}. Thus, 
\[
D_\eps^3 A(k,\hat \theta,\theta;\eps)\big|_{\eps=0} = 6\mathcal{F}(q)(\xi)\quad\text{for }\xi=k(\hat \theta-\theta)\in M.
\]
Next, suppose $M$ contains an open set $O\subset M$. Since $q\in L^p(\R^n)$
defines a compactly supported distribution then, due to the Paley-Wiener-Schwartz theorem, $\mathcal{F}(q)$ is real-analytic,
see~\cite[Theorem~7.3.1]{Hormander1}. Analytic continuation of $\mathcal{F}(q)$ from
$O$ to $\R^n$  determines $\mathcal{F}(q)$ uniquely, whence $q$ is uniquely determined.
\end{proof}

\noindent
It should be noted that the condition $p>\max\{2,n/2\}$ is only imposed to justify
the forward problem and the third-order linearization formula
\eqref{third_linarization_amp}. However, the actual uniqueness argument in
Theorem~\ref{thm: main theorem} only uses this formula together with the
compact support of $q$, which implies via the Paley--Wiener--Schwartz theorem
that $\mathcal{F}(q)$ is real-analytic (indeed, an entire function).

Finally, we note that the claims in Corollary \ref{cor_main} are obtained directly from Theorem \ref{thm: main theorem}.

\section{The direct problem and Fr\'echet differentiability}\label{sec: direct_pro_frechet_diff}
In this section we prove that the scattering problem \eqref{eq_kerr_problem} admits a unique solution
for sufficiently small incident waves. We then study the Fr\'echet differentiability of the
corresponding solution operator and of the scattering amplitude. We accommodate low $L^p$ integrability and allow singularities for the potential. For this purpose we adapt some methods from linear theory to the current nonlinear setting.

We consider sufficiently small incident waves $u_{\inc}$ satisfying
$(\Delta+k^2)u_{\inc}=0$. Writing the total field as
$u=u_{\inc}+u_{\sc}$, the scattered field $u_{\sc}$ satisfies

\begin{equation}\label{eq_scattering_problem_a}
\begin{cases}
\Delta u_{\sc} + k^2 u_{\sc} + k^2 q(x)\,|u(x)|^2 u(x) = 0, & \text{in } \R^n,\\[2mm]
\displaystyle \lim_{|x|\to\infty} |x|^{\frac{n-1}{2}}\big(\frac x{|x|}\cdot \nabla-ik\big)u_{\sc}(x)=0,
\end{cases}
\end{equation}
where the limit holds uniformly.
Recall that we assume that $q\in L^p(\R^n)$ with $p>\max\{2,n/2\}$ and that $\operatorname{supp} q \subset B := B(0,R)$, 
for some $R>0$.  
The scattered solution $u_\sc$ can be found by solving the Lippmann-Schwinger equation \eqref{eq_lippmann_schwinger},
see Proposition \ref{prop_LS_to_u_sc} below.

The Green's function $G_k$ of the Helmholtz equation, that appears in \eqref{eq_lippmann_schwinger} admits the 
explicit representation
\begin{equation}\label{eq_Green_a}
G_k(x)
= k^{n-2}\frac{i}{4}\Big(\frac{1}{2\pi k|x|}\Big)^{\frac{n-2}{2}}
H^{(1)}_{\frac{n-2}{2}}(|k||x|),
\qquad x\in\R^n\setminus\{0\},
\end{equation}
where $H^{(1)}_{\nu}$ is the Hankel function of the first kind; see, for instance,
\cite[Theorem 9.4 and (9.14)]{McLean00}.
In particular, in dimension $n=3$ one has
\[
G_k(x)=\frac{e^{ik|x|}}{4\pi|x|} ,
\qquad x\in\R^n\setminus\{0\}.
\]
For completeness, we also record the one-dimensional case. When $n=1$, the outgoing fundamental solution
of $\partial_x^2+k^2$ is given explicitly by
\[
G_k(|x|)=\frac{i}{2k}e^{ik|x|},
\]
and the corresponding volume potential has particularly simple mapping properties. In this dimension it
is sufficient to assume $q\in L^1(\R)$ (together with $\operatorname{supp}(q)\subset B$) 
in order to set up the Lippmann-Schwinger formulation and prove well-posedness for small incident amplitudes.

We define the weighted Sobolev spaces $H^m_\delta(\R^n)$, $m\in \N_0$ and $\delta \in \R$ as
\begin{equation} \label{eq_weighted_Sobolev}
\begin{aligned}
H^m_\delta(\R^n) := \{ u \in L^2_\mathrm{loc}(\R^n) \,:\, \langle x  \rangle^\delta D^\alpha u  \in L^2(\R^n), |\alpha| \leq m \},
\end{aligned}
\end{equation}
where $ \langle  x \rangle = (1+|x|^2)^{1/2}$, $D = (-i \p)^\alpha$, and $\alpha$ is a multi-index. 
We also use the notation $L^2_\delta(\R^n) := H^0_\delta(\R^n)$.
The proof of the following is essentially the same as in the linear case (see \cite{CK19}). We omit the proof. 

\begin{proposition}\label{prop_LS_to_u_sc}
Let $q\in L^p(B)$ with $p>\max\{2,n/2\}$ and $u$ be a solution of the Lippmann-Schwinger equation \eqref{eq_lippmann_schwinger}, define
$u_{\sc}:=u-u_{\inc}$. Then $u_{\sc}\in L^\infty(B)\cap H^s_{-\delta}(\R^n)$, $\delta>\frac12$, satisfies \eqref{eq_scattering_problem_a}, i.e.
\[
\Delta u_{\sc}+k^2u_{\sc}+k^2 q|u|^2u=0 \quad \text{in }\R^n,
\]
and $u_{\sc}$ fulfills the Sommerfeld radiation condition.
\end{proposition}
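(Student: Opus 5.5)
The plan is a bootstrap that reduces everything to linear mapping properties of the volume potential
\[
(G_k f)(x) := \int_{\R^n} G_k(|x-y|)\,f(y)\,dy .
\]
By \eqref{eq_lippmann_schwinger}, $u_\sc = G_k f$ with $f := k^2 q|u|^2u$. We first establish that $f\in L^2(B)$ with $\supp f\subset B$. Once this is done, the linear theory of the outgoing resolvent applies to $u_\sc = G_k f$.

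\textbf{Step 1: $L^\infty$ bounds and the source term.} We regard $u$ as a solution with $u\in L^\infty(B)$, which is the natural class for the fixed-point argument. The incident field $u_\inc$ is smooth, so it suffices to control $u_\sc$.

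Since $|G_k(x)|\lesssim |x|^{2-n}$ near $0$ for $n\ge3$ (a logarithmic bound for $n=2$), we have $G_k\in L^{r}_{\mathrm{loc}}$ for every $r<n/(n-2)$. Let $p'$ be the dual exponent of $p$. The condition $p>n/2$ is exactly $p'<n/(n-2)$. Hölder's inequality on $B$ then gives
\[
\|G_k(qw)\|_{L^\infty(B)} \lesssim \|q\|_{L^p}\,\|w\|_{L^\infty(B)} .
\]
Applying this with $w=|u|^2u$ shows $u_\sc\in L^\infty(B)$. Moreover $f=k^2q|u|^2u\in L^p(B)\subset L^2(B)$, because $p>2$ and $B$ is bounded. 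This is where both parts of the hypothesis $p>\max\{2,n/2\}$ are used.

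\textbf{Step 2: the equation and regularity.} $G_k$ is the fundamental solution of $-\Delta-k^2$, so $(-\Delta-k^2)G_k f = f$ in the sense of distributions. This gives exactly
\[
\Delta u_\sc + k^2 u_\sc + k^2 q|u|^2u = 0 .
\]
For weighted regularity, we invoke the standard resolvent estimate (Agmon's limiting absorption bound): $G_k$ maps $L^2_\delta$ to $H^2_{-\delta}$ for $\delta>\tfrac12$. A compactly supported $f\in L^2$ lies in every $L^2_\delta$, so this yields $u_\sc\in H^2_{-\delta}(\R^n)$, and hence the statement for any $s\le 2$.

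If one prefers to avoid citing Agmon's bound, there is an alternative. Locally, elliptic regularity gives $H^2$ on a neighbourhood of $B$. Outside $B$, $u_\sc$ solves the homogeneous Helmholtz equation and decays like $|x|^{-(n-1)/2}$ together with its derivatives. That decay is square-integrable against $\langle x\rangle^{-2\delta}$ exactly when $\delta>\tfrac12$.

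\textbf{Step 3: radiation condition.} For $|x|>R$, $u_\sc(x)=\int_B G_k(|x-y|)f(y)\,dy$ and the kernel is smooth there. We differentiate under the integral and use the asymptotics of $H^{(1)}_\nu$:
\[
\Bigl(\tfrac{x}{|x|}\cdot\nabla_x - ik\Bigr)G_k(|x-y|) = O\bigl(|x|^{-(n+1)/2}\bigr),
\]
uniformly for $y\in B$. Since $f\in L^1(B)$, integrating against $f$ gives the Sommerfeld condition uniformly in direction, as in \cite{CK19}.

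\textbf{Main obstacle.} The delicate point is Step 1, namely justifying that the low integrability of $q$ still yields bounded $u$ and an $L^2$ source. The convolution estimate must be set up carefully in the borderline regime, using the precise local singularity of the Hankel function. Once $f\in L^2_{\mathrm{comp}}$ is secured, the remaining steps are the linear theory verbatim.
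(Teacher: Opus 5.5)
Your argument is correct. The paper omits this proof and only points to the linear case in \cite{CK19}, but the ingredients it uses elsewhere coincide with your Steps~2 and~3. First, the outgoing volume potential solves $(\Delta+k^2)u_{\sc}=-f$ and radiates (Proposition~\ref{prop:usc_unique_LS}, via \cite{McLean00}); your kernel-asymptotics argument for the radiation condition is the standard one from \cite{CK19}. Second, Agmon's weighted resolvent estimate gives $u_{\sc}\in H^2_{-\delta}(\R^n)$ (Proposition~\ref{prop_well_posed_and_Frechet_Rn}).

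The genuinely different piece is Step~1. The paper's Lemma~\ref{lem_G_bounded} compares $G_k$ with the Newtonian kernel and passes through $W^{2,p}$ elliptic estimates and Sobolev embedding. You instead use $G_k\in L^{p'}(B_{2R})$, which is equivalent to $p>n/2$ for $n\geq 3$ and automatic for $n=2$, together with H\"older. This yields the same $L^\infty(B)$ bound far more directly.

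However, once you take $u\in L^\infty(B)$ as a hypothesis (legitimately, since the paper's solutions are those from Proposition~\ref{prop_well_posed_and_Frechet}), the claim $u_{\sc}=u-u_{\inc}\in L^\infty(B)$ is immediate. So this estimate is not needed for the proposition at all. The condition $p>n/2$ matters only for the existence theory. Here only $p\geq 2$ is used, to place $f$ in $L^2_\delta(\R^n)$ for Agmon's estimate. Your closing ``main obstacle'' paragraph therefore misidentifies the difficulty: given the a priori bound, everything reduces at once to the linear theory.
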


\noindent
Next, we define the convolution operator $\mathcal G_k$ by
\begin{equation}\label{eq_G_conv_def}
(\mathcal G_k v)(x)
:= \int_{\R^n} G_k(x-y)\, q(y)\, v(y)\,dy,
\qquad x\in\R^n.
\end{equation}
We have the following estimates for $\mathcal G_k$.

\begin{lemma} \label{lem_G_bounded}
Suppose $q \in L^p(B)$, $p > \max\{1,n/2\}$. Then 
$$
\| \mathcal G_k v \|_{L^\infty(B)} \leq C_k \| v \|_{L^\infty(B)}.
$$
\end{lemma}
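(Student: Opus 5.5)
The plan is to apply Hölder's inequality pointwise in $x\in B$. Fix $x\in B$ and write
\[
|(\mathcal G_k v)(x)| \le \int_B |G_k(x-y)|\,|q(y)|\,|v(y)|\,dy \le \|v\|_{L^\infty(B)} \,\|q\|_{L^p(B)}\, \|G_k(x-\cdot)\|_{L^{p'}(B)},
\]
where $1/p+1/p'=1$. The lemma then reduces to a bound on $\sup_{x\in B}\|G_k(x-\cdot)\|_{L^{p'}(B)}$. Since $x,y\in B=B(0,R)$, we have $|x-y|\le 2R$. The supremum is therefore at most $\|G_k\|_{L^{p'}(B(0,2R))}$, and it suffices to show that this quantity is finite.

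The core of the proof is the local integrability of the Green's function \eqref{eq_Green_a}. I will use the standard small-argument asymptotics of the Hankel function $H^{(1)}_\nu$ with $\nu=(n-2)/2$. For $n\ge3$ we have $|H^{(1)}_\nu(t)|\sim C t^{-\nu}$ as $t\to0$, so $|G_k(z)|\le C_k|z|^{2-n}$ for $|z|\le 2R$. For $n=2$, $H^{(1)}_0(t)\sim \frac{2i}{\pi}\log t$, so $|G_k(z)|\le C_k(1+|\log|z||)$. In every case $G_k$ is continuous on $\overline{B(0,2R)}\setminus\{0\}$, and the only issue is the singularity at the origin. For $n=1$ the kernel is bounded and there is nothing to prove.

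Next I check integrability. For $n\ge3$, polar coordinates give
\[
\int_{|z|\le 2R}|z|^{(2-n)p'}\,dz = c\int_0^{2R} r^{(2-n)p'+n-1}\,dr,
\]
which is finite iff $(n-2)p'<n$, that is, $p'<n/(n-2)$, that is, $p>n/2$. This is exactly where the hypothesis $p>n/2$ enters. For $n=2$ the logarithm lies in $L^{p'}_{\mathrm{loc}}$ for every $p'<\infty$, so $p>1$ suffices. This gives the requirement $p>\max\{1,n/2\}$.

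The main obstacle is pinning down the exponent bookkeeping and the Hankel asymptotics uniformly in dimension. I will quote the standard bounds $|H^{(1)}_\nu(t)|\le C_\nu t^{-\nu}$ for $\nu>0$ and $|H^{(1)}_0(t)|\le C(1+|\log t|)$ on bounded intervals $0<t\le 2kR$. Setting $C_k:=\|q\|_{L^p(B)}\|G_k\|_{L^{p'}(B(0,2R))}$ then completes the estimate. I will also remark that the same argument shows $\mathcal G_k v$ is continuous, though that is not needed here.
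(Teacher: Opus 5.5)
Your proof is correct, but it takes a different route from the paper's. Both arguments start by pulling out $\|v\|_{L^\infty(B)}$. The paper then splits the kernel as $G_k=k^{n-2}\bigl(C_n|z|^{2-n}+h\bigr)$ (resp.\ $C_2\log|z|+h$ for $n=2$) and compares the singular part with the Newtonian potential. It bounds $\Gamma*|q|$ in $L^\infty(B)$ via the elliptic estimate $\|\Gamma*|q|\|_{W^{2,p}(B)}\lesssim\|q\|_{L^p(B)}$ (Gilbarg--Trudinger, Theorem 9.9) together with the Sobolev embedding $W^{2,p}\hookrightarrow L^\infty$ for $p>n/2$. The remainder $h$ is handled separately by factoring it through $\Gamma$.

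You instead apply H\"older's inequality pointwise in $x$ and use only the local integrability $|z|^{2-n}\in L^{p'}(B(0,2R))$, which holds exactly when $p>n/2$. Your version is more elementary and self-contained. It needs only the pointwise bound $|G_k(z)|\le C_k|z|^{2-n}$ (resp.\ $C_k(1+|\log|z||)$) on $B(0,2R)$, so it avoids both the kernel decomposition and the separate treatment of $h$. It also makes transparent why $p>n/2$ is the threshold and yields the explicit constant $\|q\|_{L^p(B)}\|G_k\|_{L^{p'}(B(0,2R))}$.

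The paper's route gives more than boundedness, namely $W^{2,p}$ regularity of the comparison potential and hence H\"older continuity. That extra information is not needed for the lemma. As you note, continuity of $\mathcal G_k v$ also follows from your approach, via continuity of translations in $L^{p'}$ (here $p'<\infty$ since $p>1$).
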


\begin{proof}

Let $x\in B$. Since $\operatorname{supp}(q)\subset B$, we have
\[
(\mathcal G_k v)(x)=\int_B G_k(x-y)\,q(y)\,v(y)\,dy,
\]
so that,
\begin{equation}\label{eq:Gk_basic_bound}
|(\mathcal G_k v)(x)| \le \|v\|_{L^\infty(B)} \int_B |G_k(x-y)|\,|q(y)|\,dy.
\end{equation}
Let us now specify the behaviour of $G_k$ at zero. 
The Hankel function appearing in formula \eqref{eq_Green_a} behaves as
\begin{equation}\label{eq_H1_asymp}
H^{(1)}_{\frac{n-2}{2}}(|k||x|)
\sim
\begin{cases}
-(i/\pi) C_n |kx|^{(2-n)/2}, &  n\geq 3, \\
 \;(2i/\pi ) \log(|kx|),   &  n=2.
\end{cases}
\qquad
\text{ as } |kx| \to 0,
\end{equation}
where $C_n$ is given by the Gamma function, see formulas (10.7.2) and (10.7.7) in \cite{DLMF}. 
We also know that $H^{(1)}_{\frac{n-2}{2}}(x)$ is analytic on $x>0$, and bounded when $x \to \infty$ 
(see formula (10.7.8) in\cite{DLMF}). 
It follows from formula \eqref{eq_Green_a}, using \eqref{eq_H1_asymp} and the power series expansions
of Bessel functions (see (10.4.3), (10.2.2), (10.2.3) and (10.8.2) in \cite{DLMF}) that 
\begin{equation}\label{eq_Gk_decomp}
G_k(x)= k^{n-2}
\begin{cases}
C_n |z|^{2-n} + h(z), & \quad n\geq 3, \\
C_2 \log|z| + h(z),   & \quad n = 2,
\end{cases},
\qquad \quad z := |kx|,
\end{equation}
where $h$ is an analytic function on $z>0$ depending on $n$, 
that behaves at worst as $h \sim |z|^{2-n+\epsilon}$ at zero, for some $\epsilon > 0$,
in the case $n\geq 3$, and where $h$ is bounded in the case $n=2$.
The asymptotic behaviour of $G_k$ at zero coincide with that of the kernel of the Newtonian potential $\Gamma$, where $\Gamma$ is 
given by  
\begin{align*}
\Gamma(x-y) = 
\begin{cases}
 \int_B  |x-y|^{2-n}dy, &  n\geq 3, \\
 \int_B | \log(|x-y|)|dy,   &  n=2.
\end{cases}
\end{align*}
For $\Gamma$  we have by Sobolev embedding (see e.g. p.158 in \cite{GT01}) and Theorem 9.9 in \cite{GT01} that for all $n/2 < p < \infty$ 
\begin{align*}
\|\Gamma * |q|\|_{L^\infty(B)} \leq  \|\Gamma * |q| \|_{W^{2,p}(B)} \leq  \|q \|_{L^{p}(B)}. 
\end{align*}
Since we can write $h(z) = \Gamma(z) |z|^{n-2} h(z)$, and $|z|^{n-2} h(z) \in L^\infty(B)$, we also
get by the above that
\begin{align*}
 \big\| |h| * |q| \big\|_{L^\infty(B)}
\leq  
 \big\| |z|^{n-2} h(z)  \big\|_{L^\infty(B_{2R})}  \big\| \Gamma * |q| \big\|_{L^\infty(B)}
\leq  
C  \big\|\Gamma * |q|  \big\|_{W^{2,p}(B)} \leq  \|q \|_{L^{p}(B)}, 
\end{align*}
with $B_{2R} := B(0, 2R)$,
in the case $n \geq 3$.
Thus by \eqref{eq_Gk_decomp}  we have 
\begin{align*}
|\mathcal G_k v|  
\leq 
C_k  \| v \|_{L^\infty(B)} 
(\big\|\Gamma * |q|\big\|_{L^\infty(B)} 
+ \big\| h * |q|\big\|_{L^\infty(B)})
\leq 
C_k  \| v \|_{L^\infty(B)}  \|q \|_{L^{p}(B)}. 
\end{align*}
\end{proof}

\noindent
We introduce the closed ball in $C(\overline B)$ (equipped with the sup norm)
\[
\mathcal B_\delta := \{u\in C(\overline B): \|u\|_{L^\infty(B)} \le \delta\}.
\]

\begin{lemma}\label{lem_cubic_nemytskii}
Let $B\subset \mathbb{R}^n$ be measurable. The
operator
\[
\mathcal N : L^\infty(B) \to L^\infty(B), \qquad \mathcal N(u)=|u|^2u,
\]
is $C^\infty$ (real) Fr\'echet differentiable.
\end{lemma}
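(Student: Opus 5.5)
The idea is to view $\mathcal N$ as the restriction to the diagonal of a bounded real-trilinear map. We regard $L^\infty(B)$ as a real Banach space, since complex conjugation is only $\R$-linear. Define
\[
T : L^\infty(B)^3 \to L^\infty(B), \qquad T(u_1,u_2,u_3) = u_1\,\overline{u_2}\,u_3 .
\]
This map is real-trilinear. Because $L^\infty(B)$ is a Banach algebra under pointwise multiplication and conjugation is an isometry, it is bounded:
\[
\|T(u_1,u_2,u_3)\|_{L^\infty(B)} \le \|u_1\|_{L^\infty(B)}\|u_2\|_{L^\infty(B)}\|u_3\|_{L^\infty(B)}.
\]
We then have $\mathcal N(u) = T(u,u,u)$.

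The main step is the standard fact that a bounded multilinear map is $C^\infty$, and hence so is its composition with the bounded linear diagonal map $u\mapsto(u,u,u)$. To keep the argument self-contained, we expand $\mathcal N(u+h)$ by multilinearity:
\[
\mathcal N(u+h) = \mathcal N(u) + \bigl(2|u|^2h + u^2\overline h\bigr) + \bigl(2u|h|^2 + \overline u h^2\bigr) + |h|^2h .
\]
The first-order term gives the candidate derivative $D\mathcal N(u)h = 2|u|^2h + u^2\overline h$. This operator is $\R$-linear and bounded with norm at most $3\|u\|_\infty^2$. The remainder is bounded by $3\|u\|_\infty\|h\|_\infty^2 + \|h\|_\infty^3 = o(\|h\|_\infty)$, so $\mathcal N$ is Fréchet differentiable.

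For the higher derivatives, the map $u\mapsto D\mathcal N(u)\in\mathcal L_\R(L^\infty,L^\infty)$ is a bounded real-quadratic polynomial in $u$. It is therefore differentiable, with $D^2\mathcal N(u)(h_1,h_2) = 2\bigl(u\overline{h_1}h_2 + u h_1\overline{h_2} + \overline u h_1 h_2\bigr)$. The third derivative is
\[
D^3\mathcal N(u)(h_1,h_2,h_3) = 2\sum_{\sigma} h_{\sigma(1)}\overline{h_{\sigma(2)}}h_{\sigma(3)},
\]
where the sum runs over the three choices of which $h_j$ is conjugated. This expression is constant in $u$, so $D^m\mathcal N = 0$ for all $m\ge 4$. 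Each derivative is a bounded multilinear map by the $L^\infty$ product estimate, so continuity of every derivative follows and $\mathcal N$ is $C^\infty$.

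No serious obstacle is expected. The only point needing care is that differentiability must be understood in the real sense, since $\overline h$ appears and $\mathcal N$ is not complex-differentiable. Measurability of $B$ plays no role beyond making $L^\infty(B)$ well defined.
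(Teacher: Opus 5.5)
Your proof is correct and takes essentially the paper's route: the paper also regards $L^\infty(B;\mathbb{C})$ as a real Banach space and writes $\mathcal N(u)=u\cdot\overline{u}\cdot u$ as a composition of the bounded bilinear product with the real-linear conjugation, then cites the standard smoothness of bounded multilinear maps (Cartan), and your trilinear map restricted to the diagonal is the same idea. Your explicit expansion and derivative formulas are a correct self-contained supplement; for instance, they give $D^3\mathcal N(0)(h,h,h)=6|h|^2h$, consistent with the factor $6$ in the paper's third-order linearization formula.
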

\begin{proof}
We view $L^\infty(B)=L^\infty(B;\mathbb{C})$ as a real Banach space.
Writing $\mathcal N(u)=u\cdot \overline{u}\cdot u$, the map is a composition of the continuous bilinear map $(u,v)\mapsto uv$ and the real linear map $u\mapsto \overline{u}$.
Finite compositions of continuous multilinear maps between Banach spaces are smooth in the real Fr\'echet sense (see Theorems 2.2.1 and 2.4.3 in \cite{Ca83}).
Hence $\mathcal N$ is real $C^\infty$  Fr\'echet differentiable.
\end{proof}

Proposition~\ref{prop_well_posed_and_Frechet} gives the scattered field $u_{\sc}$ restricted to $B$.
In Proposition~\ref{prop_well_posed_and_Frechet_Rn} we then extend $u_{\sc}$ to all of $\R^n$.

\begin{proposition} \label{prop_well_posed_and_Frechet}
The following holds:
\begin{enumerate}[(a)]
\item
There exists $\eps_k >0$ such that equation \eqref{eq_lippmann_schwinger} has a unique solution $u=u_\inc + u_\sc$ with $u\in \mathcal B_{\eps_k} $,  when $u_\inc \in \mathcal B_{\eps_k}$ and  $0 < \eps < \eps_k$.

\item Let $S: \mathcal B_{\eps_k} \to L^\infty(B)$, be the map $S:u_\inc \mapsto u_\sc$. Then $S$ is three times Fr\'echet differentiable.
\end{enumerate}
\end{proposition}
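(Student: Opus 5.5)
We will treat the Lippmann--Schwinger equation \eqref{eq_lippmann_schwinger}, restricted to $B$, as a fixed point problem in $C(\overline B)$ (or $L^\infty(B)$) and then apply the implicit function theorem. Define
\[
F(u_\inc,w) := w - k^2\,\mathcal G_k\bigl(\mathcal N(u_\inc+w)\bigr),\qquad w=u_\sc ,
\]
where $\mathcal G_k$ is the convolution operator \eqref{eq_G_conv_def} and $\mathcal N(u)=|u|^2u$. By Lemma~\ref{lem_G_bounded}, $\mathcal G_k$ is a bounded linear map on $L^\infty(B)$, with norm at most $C_k\|q\|_{L^p(B)}$. By Lemma~\ref{lem_cubic_nemytskii}, $\mathcal N$ is real $C^\infty$. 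It follows that $F$ is real $C^\infty$ on $L^\infty(B)\times L^\infty(B)$.

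For part (a) we use the contraction mapping principle directly, which gives explicit smallness. Set $T(w)=k^2\mathcal G_k\mathcal N(u_\inc+w)$. For $\|u_\inc\|_\infty\le\eps$ and $\|w\|_\infty\le\eps$ we get $\|T(w)\|_\infty\le k^2C_k\|q\|_{L^p}(2\eps)^3$. The pointwise identity
\[
|a|^2a-|b|^2b=(a-b)\bigl(|a|^2+|b|^2\bigr)/2+\ldots
\]
gives $\bigl||a|^2a-|b|^2b\bigr|\le 3\max(|a|,|b|)^2|a-b|$, and hence $\|T(w_1)-T(w_2)\|_\infty\le 12k^2C_k\|q\|_{L^p}\eps^2\|w_1-w_2\|_\infty$. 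Choosing $\eps_k$ small makes $T$ a self-map and a contraction of the ball of radius $\eps_k$, so Banach's fixed point theorem gives a unique solution there.

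To keep the solution in $C(\overline B)$, we note that $\mathcal G_k$ maps $L^\infty(B)$ into continuous functions. This holds because $G_k q\in L^1$ uniformly, by the same estimate that proves Lemma~\ref{lem_G_bounded}, together with continuity of translation in $L^{p'}$ applied to the kernel. Uniqueness in the stated ball $\mathcal B_{\eps_k}$ follows after adjusting constants, for example by working with $\|u\|\le\eps_k$, $\|u_\inc\|\le\eps_k/2$, or by rescaling radii.

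For part (b), $\partial_wF(0,0)=I$, because $D\mathcal N(0)=0$ (the map is cubic). More generally, $\partial_wF(u_\inc,w)=I-k^2\mathcal G_k D\mathcal N(u_\inc+w)$, and the second term has norm $O(\eps_k^2)<1$, so it is invertible by a Neumann series. The implicit function theorem in real Banach spaces then gives that $u_\inc\mapsto w=S(u_\inc)$ is $C^\infty$, in particular three times Fr\'echet differentiable, near each point of $\mathcal B_{\eps_k}$. By uniqueness from (a), the local solution maps agree with $S$.

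The main obstacle is the interplay between complex conjugation and differentiability. We must work with real Fréchet derivatives throughout, viewing $L^\infty(B;\C)$ as a real Banach space, since $\mathcal N$ is not complex differentiable; the implicit function theorem is applied in that real setting. A secondary technical point is continuity of $\mathcal G_kv$, needed so that the solution lies in $C(\overline B)$ rather than merely in $L^\infty(B)$.
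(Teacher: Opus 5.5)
Your proof is correct. For part (b) it is essentially the paper's argument. The paper sets $F(u,u_\inc)=u-u_\inc-\mathcal G_k(|u|^2u)$ on $L^\infty(B)\times L^\infty(B)$, which is your $F$ after the substitution $u=u_\inc+w$; you also correctly keep the factor $k^2$ that the paper drops. It observes that $F$ is real $C^3$ by Lemmas~\ref{lem_G_bounded} and~\ref{lem_cubic_nemytskii} and that $D_uF(0,0)=\mathrm{Id}$. It then reads off \emph{both} (a) and (b) from a single application of the implicit function theorem at the origin.

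You differ in part (a). You get existence and uniqueness from Banach's fixed point theorem, via the pointwise bound $\bigl||a|^2a-|b|^2b\bigr|\le 3\max(|a|,|b|)^2|a-b|$. You then apply the implicit function theorem at every solution point, with $\partial_wF$ inverted by a Neumann series, and glue the local branches using the uniqueness from (a).

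The paper's route is shorter. Yours buys three things. First, explicit smallness conditions such as $12k^2C_k\|q\|_{L^p}\eps_k^2<1$. Second, a quantified ball in which uniqueness holds: the implicit function theorem only yields uniqueness in some unspecified neighbourhood, so the paper's formulation (unique $u\in\mathcal B_{\eps_k}$ whenever $u_\inc\in\mathcal B_{\eps_k}$) really needs the radius adjustment you describe. Third, it addresses the fact that $\mathcal B_\delta$ is a ball in $C(\overline B)$, which the paper silently replaces by $L^\infty(B)$.

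Two small points of phrasing. For the continuity step, the clean statement is H\"older's inequality with $G_k\in L^{p'}_{\mathrm{loc}}$, which holds exactly because $p>n/2$, together with continuity of translation in $L^{p'}$; this also proves Lemma~\ref{lem_G_bounded} directly. When matching the local implicit-function branches with $S$, use that the fixed point satisfies $\|w\|_\infty\le 8k^2C_k\|q\|_{L^p}\eps_k^3<\eps_k$. So it lies in the interior of the uniqueness ball, and nearby branch values stay there. Alternatively, as the paper does, shrink $\eps_k$ so that $\mathcal B_{\eps_k}$ lies inside the neighbourhood given by the implicit function theorem at $(0,0)$.
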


\begin{proof}
We apply the implicit function theorem \cite[Theorem~10.6]{RR04}.  
Define
\[
F: L^\infty(B)\times L^\infty(B)\to L^\infty(B),\qquad
F(u,u_{\inc}) := u - u_{\inc} - \mathcal G_k\big(|u|^2u\big).
\]
By Lemma~\ref{lem_G_bounded}, the operator $\mathcal G_k$ maps $L^\infty(B)$ to $L^\infty(B)$, hence $F$ is well-defined and \(F(0,0)=0\). 

To verify the hypotheses of the implicit function theorem, we next show that
\begin{enumerate}[(i)]
\item $F$ is $C^3$ Fr\'echet differentiable in a neighbourhood of $(0,0)$;
\item the partial derivative $D_uF(0,0):L^\infty(B)\to L^\infty(B)$ is a bounded isomorphism.
\end{enumerate}
Define the nonlinear map
\[
\mathcal N:L^\infty(B)\to L^\infty(B),\qquad
\mathcal N(u)=|u|^2u
\]
By Lemma~\ref{lem_cubic_nemytskii} $\mathcal N$ is $C^3$ on $L^\infty(B)$. Since \(\mathcal G_k:L^\infty(B)\to L^\infty(B)\) is linear and
bounded by Lemma~\ref{lem_G_bounded}, the composition
\(\mathcal G_k\circ\mathcal N\) is real \(C^3\). Hence \(F\) is real \(C^3\), proving (i).
In particular, at $(u,u_{\inc})=(0,0)$ we obtain $D_uF(0,0)[h]=h$, so $D_uF(0,0)=\mathrm{Id}$ is a bounded bijection on
$L^\infty(B)$, thus $(ii)$ also holds.

The equation $F(u,u_{\mathrm{inc}})=0$ is equivalent to the Lippmann–Schwinger equation
\eqref{eq_lippmann_schwinger}. By the implicit function theorem~\cite[Theorem 10.6]{RR04}, there exists $\varepsilon_k>0$
and a unique map
\[
S:\mathcal B_{\varepsilon_k}\subset L^\infty(B)\to L^\infty(B)
\]
such that $u=S(u_{\mathrm{inc}})$ solves $F(u,u_{\mathrm{inc}})=0$ for every
$u_{\mathrm{inc}}\in\mathcal B_{\varepsilon_k}$. This proves (a). Moreover, due to the implicit function theorem and~\cite[Remark~10.5]{RR04}, the solution map $S$ is $C^3$, which yields (b).
\end{proof}

Proposition~\ref{prop_well_posed_and_Frechet} yields a solution $u_{\sc}$ of the Lippmann-Schwinger equation
restricted to the ball $B$. We now show that $u_{\sc}$ admits a natural extension to all of $\R^n$. 

\begin{proposition}\label{prop_well_posed_and_Frechet_Rn}

Assume $q\in L^p(\R^n)$ with $p>\max\{2,n/2\}$, and define $\widetilde u_\sc(x)$ for $x \in \R^n$ by
\begin{equation}\label{eq:uscattered}
\begin{aligned}
\widetilde u_\sc(x) = k^2 \int_{\R^n} G_k(x-y) q(y)  |u(y)|^2 u(y)  \, dy,
\end{aligned}
\end{equation}
where $u := (u_\inc+S(u_\inc))(y) \in L^\infty(B)$ and $S$ is the operator in Proposition \ref{prop_well_posed_and_Frechet}, and $u_\inc \in \mathcal B_{\varepsilon_k}$.
Then $\widetilde u_\sc \in H_{-\delta}^2(\R^n)$, for $\delta > 1/2$, and the operator  $u_\inc  \mapsto \widetilde u_\sc:  L^\infty(B)\to H^2_{-\delta}(\R^n)$ is three times Fréchet differentiable.
\end{proposition}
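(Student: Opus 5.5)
The map $u_\inc\mapsto \widetilde u_\sc$ is a composition of three maps:
\[
u_\inc\;\longmapsto\; u=u_\inc+S(u_\inc)\;\longmapsto\; f:=q\,|u|^2u\;\longmapsto\; k^2\int_{\R^n}G_k(\cdot-y)f(y)\,dy .
\]
The plan is to show that each factor is differentiable enough and then apply the chain rule.

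\textbf{First two maps.} By Proposition~\ref{prop_well_posed_and_Frechet}(b), the map $u_\inc\mapsto u$ is $C^3$ from $\mathcal B_{\eps_k}$ into $L^\infty(B)$. By Lemma~\ref{lem_cubic_nemytskii}, $\mathcal N(u)=|u|^2u$ is $C^\infty$ on $L^\infty(B)$. Multiplication by $q$ is a bounded linear map $L^\infty(B)\to L^p(B)$. Since $p>2$ and $B$ is bounded, $L^p(B)\hookrightarrow L^2(B)$. Hence $u_\inc\mapsto f$ is $C^3$ into $L^2_{c}(B)$, the space of $L^2$ functions supported in $B$. The same holds with $L^2$ replaced by $L^p$ if that is needed later.

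\textbf{Third map: the key estimate.} The main work is to show that the volume potential
\[
\mathcal V_k f(x)=\int_{\R^n}G_k(x-y)f(y)\,dy
\]
is a bounded linear map from $L^2(B)$ (functions supported in $B$) into $H^2_{-\delta}(\R^n)$ for every $\delta>1/2$. Since $\mathcal V_k$ is linear, its Fréchet differentiability is then automatic, and the chain rule gives three-times differentiability of the full composition. I would prove the estimate by the classical Agmon route, which is the same one used for the resolvent in the linear theory.

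\emph{Outgoing resolvent bound.} The function $\mathcal V_k f$ equals the limiting absorption resolvent $(-\Delta-k^2-i0)^{-1}f$. For $\delta>1/2$, Agmon's estimate gives
\[
\|\mathcal V_k f\|_{L^2_{-\delta}}\le C\|f\|_{L^2_\delta}\le C_R\|f\|_{L^2(B)},
\]
where the second inequality uses compact support in $B$.

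\emph{Derivative control.} The function $w=\mathcal V_k f$ solves $-\Delta w=k^2w+f$. I would localize with cutoffs $\chi_j$ adapted to dyadic annuli and apply interior elliptic $H^2$ estimates on each annulus. Summing these with the weights $\langle x\rangle^{-2\delta}$ gives
\[
\|w\|_{H^2_{-\delta}}\le C\bigl(\|w\|_{L^2_{-\delta}}+\|f\|_{L^2_{-\delta}}\bigr).
\]
Alternatively, one can commute $\langle x\rangle^{-\delta}$ with $\Delta$ and absorb the lower-order commutator terms.

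\textbf{Expected obstacle.} The main difficulty is the Agmon-type $L^2_\delta\to L^2_{-\delta}$ bound. I would cite it rather than reprove it. If a self-contained argument is preferred, the bound near $B$ comes from the local integrability of $G_k$ already used in Lemma~\ref{lem_G_bounded}. The bound far from $B$ comes from the decay $|G_k(x-y)|\lesssim |x|^{-(n-1)/2}$ for $y\in B$ and $|x|\ge 2R$, together with the analogous decay of $\nabla G_k$ and $\nabla^2 G_k$. This decay makes $\langle x\rangle^{-\delta}\partial^\alpha\mathcal V_k f$ square integrable outside $B_{2R}$ precisely when $2\delta>1$, i.e. when $\delta>1/2$, via
\[
\int_{2R}^\infty r^{-2\delta}\,r^{-(n-1)}\,r^{n-1}\,dr<\infty .
\]
Inside $B_{2R}$, the $H^2$ bound follows from the Calderón--Zygmund estimates (Theorem 9.9 in \cite{GT01}), exactly as in Lemma~\ref{lem_G_bounded}.
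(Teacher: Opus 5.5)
Your proposal is correct and follows essentially the same route as the paper. Both arguments write $u_\inc\mapsto\widetilde u_\sc$ as the composition of the $C^3$ solution map $S$, the smooth cubic map $\mathcal N$, multiplication by $q\in L^p(B)\subset L^2(B)$ (using $p>2$), and the outgoing resolvent $R_0^+(k^2)$, which is bounded $L^2_\delta\to H^2_{-\delta}(\R^n)$ for $\delta>1/2$ by Agmon's estimate, and then apply the chain rule. The paper cites Agmon's bound directly in $H^2_{-\delta}$ form, whereas you take the $L^2_{-\delta}$ bound and upgrade it by local elliptic estimates (or, alternatively, via the compact support of $f$ and the Hankel decay); your explicit routing of the linear step through $L^2(B)\to H^2_{-\delta}(\R^n)$ is also cleaner than the paper's appeal to Lemma~\ref{lem_G_bounded} at that point.
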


\begin{proof}

Fix $\delta>1/2$ and let $u_{\inc}\in\mathcal B_{\varepsilon_k}$ be as in Proposition~\ref{prop_well_posed_and_Frechet}.
Set
\[
f := k^2\,q\,|u|^2u.
\]
Recall that
$
f:=k^2\,q\,|u|^2u,
$
and 
$
u=u_{\inc}+u_{\sc},
$
so that $\supp(f)\subset\supp(q)\subset B$. 
To apply Agmon's estimates, we use the fact that $p>2$, so that by H\"older's inequality $q\in L^2(B)$, which for $u\in L^\infty(B)$, after extension by zero, gives
$f\in L^2(B)\subset L^{2}_\delta(\R^n)$ and
\begin{equation}\label{eq:f_bound}
\|f\|_{L^{2}_\delta(\R^n)} \le C\,k^2\,\|q\|_{L^2(B)}\,\|u\|_{L^\infty(B)}^3.
\end{equation}
where $C$ depends on $B$ and $\delta$, because $(1 + |x|^2)^{\delta/2}$ is bounded on $B$.

Let $k>k_0>0$. Denote by
\[
R_0^+(k^2):=\lim_{\varepsilon\to 0^+}(-\Delta-(k^2+i\varepsilon))^{-1}
\]
the outgoing resolvent. Equivalently, $R_0^+(k^2)$ is the integral operator with the kernel 
$G_k$ \cite[Ch.~23, eq. (23.1)]{serov2017fourier}, so that
$$
\mathcal G_k = R_0^+(k^2) (q \,\cdot).
$$
By Agmon's weighted resolvent estimate together with the limiting absorption principle~\cite[Theorem~A.1 and Theorem~4.1]{Ag75}, there exists a constant $C$ such that
\begin{equation}\label{eq:agmon_est}
\|R_0^+(k^2)f\|_{H^2_{-\delta}(\R^n)} \le C k \,\|f\|_{L^{2}_\delta(\R^n)},
\qquad f\in L^{2}_\delta(\R^n).
\end{equation}
Since $\widetilde u_{\sc} = \mathcal G_k (k^2 |u|^2u) = R_0^+(k^2)f $ we get
from \eqref{eq:agmon_est} and \eqref{eq:f_bound} that $\widetilde u_{\sc}\in H^2_{-\delta}(\R^n)$ and 
\begin{equation}\label{eq:final_estimate}
\|\widetilde u_{\sc}\|_{H^2_{-\delta}(\R^n)}
\leq
C\,k^3\,\|q\|_{L^2(B)}\,\|u\|_{L^\infty(B)}^3.
\end{equation}
This proves the first part of the claim.

Next we prove the Fr\'echet differentiability of the map $u_{\inc}\mapsto \widetilde u_{\sc}$. We have $\widetilde u_{\sc} = \mathcal G_k(k^2 \mathcal N(u_{\inc} +  S(u_\inc))$. 
By Proposition~\ref{prop_well_posed_and_Frechet}, the map $S:\mathcal B_{\varepsilon_k}\to L^\infty(B)$ is $C^3$ and by Lemma~\ref{lem_cubic_nemytskii} $\mathcal N:L^\infty(B)\to L^\infty(B)$ is real $C^\infty$. We have that $\mathcal G_k :
L^\infty(B) \to L^\infty(B)\subset L^{2}_\delta(\R^n)$ (after extension by zero) is linear and bounded by Lemma~\ref{lem_G_bounded} therefore real $C^3$.
Therefore the composition $u_{\inc}\mapsto \widetilde u_{\sc}= k^2 \mathcal G_k\circ \mathcal N(u_{\inc} + \mathcal S (u_\inc))$ is three times Fr\'echet
differentiable from $\mathcal B_{\varepsilon_k}\subset L^\infty(B)$ into $H^2_{-\delta}(\R^n)$.
\end{proof}

\noindent

\begin{proposition}\label{prop:usc_unique_LS}
The function $u:=u_\inc+\widetilde u_\sc$, where the extension $\widetilde u_{\sc}:\R^n\to\C$ is provided by Proposition \ref{prop_well_posed_and_Frechet_Rn}, is the unique solution of the scattering problem \eqref{eq_kerr_problem}. 
\end{proposition}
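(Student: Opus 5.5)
The proof has two parts: existence, meaning $u=u_\inc+\widetilde u_\sc$ solves \eqref{eq_kerr_problem}, and uniqueness among solutions whose scattered part lies in $H^2_{-\delta}(\R^n)$ and whose total field stays small on $B$.

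\textbf{Existence.} First I check that $u$ solves the Lippmann--Schwinger equation \eqref{eq_lippmann_schwinger} on all of $\R^n$. By Proposition~\ref{prop_well_posed_and_Frechet}, $u|_B=u_\inc+S(u_\inc)$ satisfies $F(u,u_\inc)=0$ in $B$. So for $x\in B$ the function $\widetilde u_\sc(x)$ defined in \eqref{eq:uscattered} coincides with $S(u_\inc)(x)$, because $\mathcal G_k(k^2|u|^2u)$ only involves values of $u$ on $\supp q\subset B$. Outside $B$, $\widetilde u_\sc$ is defined by the same integral with the same density. Hence $u=u_\inc+\widetilde u_\sc$ satisfies \eqref{eq_lippmann_schwinger} on $\R^n$, and the density $|u|^2u$ depends only on $u|_B$. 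Proposition~\ref{prop_LS_to_u_sc} then gives that $\widetilde u_\sc$ solves the PDE in \eqref{eq_scattering_problem_a} and satisfies the Sommerfeld radiation condition, so $u$ solves \eqref{eq_kerr_problem}. Alternatively, one can note that $\widetilde u_\sc=R_0^+(k^2)f$ with $f=k^2q|u|^2u\in L^2_\delta$, that $(-\Delta-k^2)R_0^+f=f$ in the distributional sense, and that the outgoing resolvent produces radiating functions.

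\textbf{Uniqueness.} Let $w=u_\inc+w_\sc$ be another solution, with $w_\sc\in H^2_{-\delta}(\R^n)$ radiating and $\|w\|_{L^\infty(B)}\le\eps_k$. I show that $w$ satisfies \eqref{eq_lippmann_schwinger}. Set $g=k^2q|w|^2w$, which lies in $L^2$ with support in $B$, and put $z=w_\sc-R_0^+(k^2)g$. Then $(\Delta+k^2)z=0$ in $\R^n$, and $z$ is radiating because both terms are. Since $z$ is a radiating entire solution of the Helmholtz equation, Rellich's lemma (as in \cite[Lemma 2.12]{CK19}, together with Green's formula on large balls) gives $z\equiv0$. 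Thus $w_\sc=\mathcal G_k(k^2|w|^2w)$, so $w$ solves \eqref{eq_lippmann_schwinger}. Restricting to $B$ gives $F(w|_B,u_\inc)=0$. The uniqueness part of Proposition~\ref{prop_well_posed_and_Frechet}(a) within $\mathcal B_{\eps_k}$ then yields $w|_B=u|_B$. Finally, the Lippmann--Schwinger representation shows that $w_\sc$ and $\widetilde u_\sc$ agree on all of $\R^n$, because both are the volume potential of the same density supported in $B$.

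\textbf{Main obstacle.} The hard step is the Rellich argument. Classical Rellich assumes $C^2$ solutions with the radiation condition holding uniformly. Here $z$ is a priori only in $H^2_{-\delta}$. But $(\Delta+k^2)z=0$ in $\R^n$, so elliptic regularity makes $z$ real-analytic, and the classical argument applies once I confirm that the radiation condition holds pointwise for both pieces. For $R_0^+g$ with compactly supported $g$ this follows from the asymptotics of $G_k$ away from $\supp g$. A second point to handle is the continuity of $w$ on $\overline B$, which is needed to place $w$ in $\mathcal B_{\eps_k}$. This follows because $w|_B=u_\inc+\mathcal G_k(k^2|w|^2w)$, and $\mathcal G_k$ maps $L^\infty$ into continuous functions, since $\Gamma*|q|\in W^{2,p}\subset C$ when $p>n/2$. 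If needed, I would state the uniqueness class explicitly as solutions with $\|w\|_{L^\infty(B)}\le\eps_k$.
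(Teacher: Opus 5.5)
Your argument is correct. For existence it agrees in substance with the paper; for uniqueness it takes a different and sounder route. The paper's existence step only cites the mapping properties of the volume potential $G_k*f$ \cite{McLean00} to conclude that $\widetilde u_\sc$ is a radiating solution of $(\Delta+k^2)\widetilde u_\sc=-f$. You additionally check self-consistency: the density $|u|^2u$ is built from $u_\inc+\widetilde u_\sc$ itself on $B$, which the paper leaves implicit.

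For uniqueness, the paper takes two solutions and notes that $w=u_1-u_2$ is a radiating solution of $(\Delta+k^2)w=0$ in $\R^n\setminus\overline B$. It applies Rellich's lemma to get $w=0$ there, then invokes Proposition~\ref{prop_well_posed_and_Frechet} inside $B$. As written, that exterior step does not hold. Rellich's lemma requires $\int_{|x|=r}|w|^2\,ds\to0$ (vanishing far field), and a radiating exterior solution need not vanish; $G_k$ itself is an example. In addition, using Proposition~\ref{prop_well_posed_and_Frechet} presupposes that each $u_i$ satisfies \eqref{eq_lippmann_schwinger} and lies in the small ball, which the paper never establishes.

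Your subtraction $z=w_\sc-R_0^+(k^2)g$ reduces the question to an \emph{entire} radiating solution, where the argument does close:
\begin{itemize}
\item Green's formula on balls gives $\operatorname{Im}\int_{|x|=r}\overline z\,\partial_r z\,ds=0$.
\item The radiation condition then forces $\int_{|x|=r}|z|^2\,ds\to0$.
\item Rellich's lemma therefore gives $z\equiv0$.
\end{itemize}
This is exactly the missing equivalence between the differential problem and the integral equation. After it, the local uniqueness from the implicit function theorem finishes the proof.

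The cost is that you must state the uniqueness class explicitly: $w_\sc\in H^2_{-\delta}(\R^n)$ radiating and $\|w\|_{L^\infty(B)}\le\eps_k$. That is the only setting in which uniqueness can be expected for this small-data nonlinear problem, and the paper's ``unique'' is implicitly restricted in the same way.

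A minor point: the implicit function theorem is applied in $L^\infty(B)$, so continuity of $w$ on $\overline B$ is not really needed. If you do want it, the correct justification is that $k^2q|w|^2w\in L^p$ with $p>n/2$, so its volume potential lies in $W^{2,p}_{\mathrm{loc}}\subset C$. Continuity of $\Gamma*|q|$ alone does not give this.
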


\begin{proof}
Let $f:=k^2 q|u|^2u$. 
Since $G_k$ is the outgoing fundamental solution of $\Delta+k^2$, it satisfies the
Sommerfeld radiation condition. Hence, the volume potential
\[
\widetilde u_{\sc}(x)=(G_k*f)(x)=\int_{\R^n}G_k(x-y)\,f(y)\,dy
\]
is a radiating solution of $(\Delta+k^2)\widetilde u_{\sc}=-f$, see, \cite[Ch.~9]{McLean00}.

To prove uniqueness, let $\widetilde u_{\sc,1}$ and $\widetilde u_{\sc,2}$ be two radiating fields such that $u_i = \widetilde u_{\sc, i} + u_\inc$ 
satisfies \eqref{eq_kerr_problem} and define $w:=u_1-u_2$. Then, for $x\in\R^n\setminus B$ we have $q(x)=0$, and since both $u_1$ and $u_2$ solve
$(\Delta+k^2)v=0$ in $\R^n\setminus\overline B$ the function $w$ satisfies
\[
(\Delta+k^2)w=0\quad\text{in }\R^n\setminus\overline B,
\]
and $w$ is radiating (as the difference of two radiating fields). By Rellich's lemma \cite{CK19}, it follows that
\[
w=0\quad\text{in }\R^n\setminus\overline B.
\]
Hence $\widetilde u_{\sc,1}=\widetilde u_{\sc,2}$ in $\R^n\setminus\overline B$. Proposition \ref{prop_well_posed_and_Frechet} shows that $\widetilde u_{\sc,1}(x) = \widetilde u_{\sc,2}(x)$ in $B$, so that $w=0$ in $\R^n$, which proves uniqueness.
\end{proof}

\begin{proposition} \label{prop_frechet_for_A}
Let $q\in L^p(\R^n)$, $p>\max\{2,n/2\}$, with compact support, 
and let $k>0$, and $ \hat \theta,\theta\in \S^{n-1}$. Suppose  moreover that 
$u_{\inc} = \eps e^{ik \theta \cdot x}$.
Then there exists $\eps_k>0$ such that the mapping
$\eps \mapsto A(k, \hat \theta,\theta;\eps)$, with $\eps \in (-\eps_k,\eps_k)$, is three times
differentiable.
\end{proposition}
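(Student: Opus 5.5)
The plan is to write $A$ as a composition of maps already shown to be smooth, then apply the chain rule.

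\emph{Step 1: reduce to the restriction to $B$.} Since $\supp q\subset B$, the formula
\[
A(k,\hat\theta,\theta;\eps)=\int_B e^{-ik\hat\theta\cdot y}q(y)\,|u(y)|^2u(y)\,dy
\]
only involves $u$ on $B$.

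\emph{Step 2: the incident-field map.} Define $\iota:(-\eps_k,\eps_k)\to L^\infty(B)$ by $\iota(\eps)=\eps e^{ik\theta\cdot x}|_B$. This map is linear and bounded, and $\|\iota(\eps)\|_{L^\infty(B)}=|\eps|$. Hence $\iota(\eps)\in\mathcal B_{\eps_k}$ for $|\eps|<\eps_k$, where $\eps_k$ is the constant from Proposition~\ref{prop_well_posed_and_Frechet}.

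\emph{Step 3: the total field.} By Proposition~\ref{prop_well_posed_and_Frechet}(b), the map $\eps\mapsto u=\iota(\eps)+S(\iota(\eps))\in L^\infty(B)$ is $C^3$. By Proposition~\ref{prop:usc_unique_LS}, this $u$ coincides on $B$ with the unique solution of \eqref{eq_kerr_problem}. Consequently the amplitude defined through the far-field asymptotics is the one given by the integral formula above.

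\emph{Step 4: the amplitude functional.} Define $\Lambda:L^\infty(B)\to\C$ by
\[
\Lambda(w)=\int_B e^{-ik\hat\theta\cdot y}q(y)\,w(y)\,dy .
\]
Since $q\in L^p(B)\subset L^1(B)$, we have $|\Lambda(w)|\le\|q\|_{L^1(B)}\|w\|_{L^\infty(B)}$, so $\Lambda$ is a bounded real-linear functional and therefore $C^\infty$.

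\emph{Step 5: chain rule.} We have
\[
A(k,\hat\theta,\theta;\eps)=\Lambda\bigl(\mathcal N(\iota(\eps)+S(\iota(\eps)))\bigr).
\]
Here $\mathcal N$ is $C^\infty$ by Lemma~\ref{lem_cubic_nemytskii}, $S$ is $C^3$, and $\iota$ and $\Lambda$ are bounded linear. The chain rule for real Fréchet derivatives \cite{Ca83} then shows that $\eps\mapsto A$ is three times differentiable on $(-\eps_k,\eps_k)$.

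\emph{Negative $\eps$.} For $\eps<0$ we can use the symmetry $u\mapsto -u$. In any case, Proposition~\ref{prop_well_posed_and_Frechet} applies on the whole ball $\mathcal B_{\eps_k}$, so negative $\eps$ is covered directly.

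\emph{Main obstacle.} The substantive point is identifying the far-field coefficient of $\widetilde u_\sc$ with the integral $\Lambda(\mathcal N(u))$. This requires the large-$|x|$ asymptotics of $G_k(x-y)$, namely
\[
G_k(x-y)=C_{k,n}\frac{e^{ik|x|}}{|x|^{(n-1)/2}}\Bigl(e^{-ik\hat\theta\cdot y}+O(|x|^{-1})\Bigr),
\]
uniformly for $y\in B$. Multiplying by $f=k^2q|u|^2u\in L^1(B)$ and integrating gives the expansion with remainder $o(r^{-(n-1)/2})$, with the constant $k^2$ absorbed into $C_{k,n}$.

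Once this identification is made, the differentiability is purely formal. The chain rule also yields the formula $D^3_\eps A|_{\eps=0}=6\Lambda(|v|^2v)$, because $u|_{\eps=0}=0$ and $D_\eps u|_{\eps=0}=v$, where we use $DS(0)=0$ (since $D_uF(0,0)=\mathrm{Id}$ and $D\mathcal N(0)=0$).
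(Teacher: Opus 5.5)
Your proof is correct and follows essentially the same route as the paper: you write $A=\Lambda(\mathcal N(u))$, where $\Lambda$ is a bounded linear functional on $L^\infty(B)$, $\mathcal N$ is smooth by Lemma~\ref{lem_cubic_nemytskii}, and $\eps\mapsto u|_B$ is $C^3$ into $L^\infty(B)$, and then you apply the chain rule. Your citation of Proposition~\ref{prop_well_posed_and_Frechet}(b) for the $C^3$ dependence of $u|_B$ is actually more apt than the paper's appeal to Proposition~\ref{prop_well_posed_and_Frechet_Rn}, and the far-field identification you flag is something the paper takes as the definition of $A$ in Section~\ref{sec: kerr_helmholtz}, so it is not needed for the differentiability claim itself.
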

\begin{proof}
Recall that
\[
A(k,\hat\theta,\theta;\varepsilon)
=
\int_{\R^n}
e^{-ik\hat\theta\cdot y}
q(y)
|u(y,k,\theta;\varepsilon)|^2
u(y,k,\theta;\varepsilon)
\,dy.
\]
Since $q$ has compact support, the integral is effectively taken over a bounded set
$B\supset \operatorname{supp}(q)$.
By Proposition \ref{prop_well_posed_and_Frechet_Rn}, there exists
$\eps_k>0$ such that the map
\[
\eps \mapsto u(\cdot,k,\theta;\eps)
\]
is three times Fr\'echet differentiable as a map
$(-\eps_k,\eps_k)\to L^\infty(B)$. From Lemma~\ref{lem_cubic_nemytskii} it follows that
\[
\eps \mapsto |u(\cdot,k,\theta;\eps)|^2u(\cdot,k,\theta;\eps)
\]
is also three times real-Fr\'echet differentiable as an $L^\infty(B)$-valued map. In particular, $\partial_\eps^j\bigl(|u|^2u\bigr)\in L^\infty(B)$ for $j=0,1,2,3.$

Finally, note that the map $\mathcal A:L^\infty(B)\to\C$ defined by
\[
f\mapsto \int_{B}e^{-ik\hat\theta\cdot y} q(y)f(y)dy
\]
is a bounded linear functional, hence $C^\infty$ in the Fr\'echet sense, implying that
\[
A(k,\hat\theta,\theta;\eps)=\mathcal A(|u|^2u)
\]
is three times differentiable in the parameter $\eps\in(-\eps_k,\eps_k)$.
\end{proof}

\section{Numerical reconstruction for the nonlinear Helmholtz equation}\label{sec: numerics_frame}
In this section, we present a numerical framework for the reconstruction of the potential $q$ from nonlinear Helmholtz scattering data. In section \ref{subsec: limmmann-schwinger}, we reformulate the direct problem as a Lippmann-Schwinger integral equation and solve it numerically using the Vainikko method combined with Newton's method to handle the nonlinearity. In section \ref{subsec: F(q)} we derive a formula that relates the Fourier transform of the potential $q$ to the scattering measurement; in particular, we focus on backscattering, fixed angle, and fixed energy cases, and in section \ref{subsec: rec of q from F(q)} we reconstruct $q$
from its Fourier transform using either Tikhonov or TV regularization. 

We consider the nonlinear Helmholtz equation in two dimensions
\begin{equation}
\Delta u(x) + k^2 \bigl(1 + q(x)\,|u(x)|^2\bigr) u(x) = 0,
\qquad x \in \mathbb{R}^2,
\end{equation}
where $k > 0$ is the wavenumber and $q$ is a compactly supported nonlinear potential on $B$.
The total field $u$ is decomposed as
\begin{equation*}
u(x) = u_\inc(x) + u_\sc(x),
\end{equation*}
where the incident wave is a plane wave
\begin{equation*}
u_\inc(x) = \varepsilon e^{ik \theta \cdot x},
\qquad \theta \in \mathbb{S}^1,
\end{equation*}
and the scattered field $u_\sc$ satisfies the Sommerfeld radiation condition.

\subsection{Lippmann-Schwinger formulation}\label{subsec: limmmann-schwinger}
Using the Green's function of the Helmholtz operator, the nonlinear Helmholtz equation can be written as the nonlinear
Lippmann-Schwinger integral equation
\begin{equation}
u(x) = u_\inc(x)
+ \int_{\mathbb{R}^2}  G_k(x-y)\, k^2 q(y)\, |u(y)|^2 u(y)\, dy,
\end{equation}
where in the two dimensional case
the radiating fundamental solution~\eqref{eq_Green_a}
of the Helmholtz operator is
\[
G_k(x-y)=\frac{i}{4}H_0^{(1)}(k|x-y|).
\]
Since \(q\) is supported in the ball $B$, the domain of integration reduces to $B$.

To solve the Lippmann-Schwinger equation numerically, we use the Fast Fourier Transform (FFT)-based
approach introduced by Vainikko~\cite{vainikko2000fast}. In this approach, the convolution operator appearing in the Lippmann-Schwinger equation is approximated on a uniform grid. The Fourier coefficients of the Helmholtz fundamental solution are used to represent the convolution kernel, which allows the convolution to be evaluated efficiently using FFTs.

We define the nonlinear operator \(\mathcal{T}: L^\infty(B_{2R}) \to L^\infty(B_{2R})\) by
\begin{equation}
\mathcal{T}(u)(x) = u(x)-u_\inc(x)-\int_{B}  G_k(x-y)\, k^2 q(y)\, |u(y)|^2 u(y)\, dy,
\end{equation}
reducing the Lippmann-Schwinger equation to locating $u\in L^\infty(B_{2R})$ such that
\begin{equation*}
\mathcal{T}(u) = 0.
\end{equation*}

The nonlinear system $\mathcal{T}(u)=0$ is solved using Newton's method. We will solve the system iteratively as
\begin{equation}
    u^{n+1} = u^n + \delta u^n,\quad u_0=u_\inc,
\end{equation}
where the Newton correction $\delta u^n$ satisfies
\begin{equation}\label{eq: correction term}
\mathcal{T}'(u^{n})[\delta u^n]=-\mathcal{T}(u^{n}).
\end{equation}
Since the Fr\'echet derivative of the nonlinearity $f(u) = |u|^2 u$
is
\begin{equation}
f'(u)[v] = 2|u|^2 v + u^2 \overline{v},
\end{equation}
the equation \eqref{eq: correction term} is given explicitly as the real-linearized system
\begin{equation}
\delta u^n(x) - \int_{B}  G_k(x-y)\, k^2 q(y)
\left( 2|u^n(y)|^2 \delta u^n(y) + (u^n(y))^2 \overline{\delta u^n(y)} \right) dy = -\mathcal{T}(u^n)(x).
\end{equation}
The equation can be rewritten in terms of the Fourier transform as
\begin{equation}\label{eq: correction delta u}
\delta u^n(x) - \mathcal{F}^{-1}\!\left(
\mathcal{F}(G_k)(\xi)\,
\mathcal{F}(\,k^2 q \left( 2|u^n|^2 \delta u^n + (u^n)^2 \overline{\delta u^n} \right))(\xi)
\right)(x) = -\mathcal{T}(u^n)(x),
\end{equation}
where, after discretization, the Fourier coefficients of $G_k$ are computed explicitly following the formulas on page~16 of \cite{vainikko2000fast}.

After discretization on the computational grid, this equation yields a real-linear system for the unknown correction \(\delta u^n\). By separating real and imaginary parts, it is converted into a real-valued linear system, which is solved using the GMRES method. More details on the discretization are given in the next paragraph.

\paragraph{Discretization of the forward problem}
Following the approach of Vainikko, the computational domain is restricted to a square
\[
\Omega = [-L,L]^2,
\qquad L = \alpha R,
\]
containing the support of the potential.
The domain is then discretized using a uniform Cartesian grid of size $2N \times 2N = 2^M \times 2^M$ points, where $M\in\mathbb{N}$ denotes the refinement level. The mesh size and the discrete grid points are given respectively by 

\[\qquad h = \frac{2L}{2N-1} , \qquad
x_{ij} = (-L + ih,\,-L + jh),
\qquad 0\leq i,j \leq 2^M-1.
\]

In order to use the GMRES method, we use the real-system reformulation similarly as in~\cite{eirola2003solution,huhtanen2012numerical}.
Denoting the (complex-valued) discretized linearized equation \eqref{eq: correction delta u} by
\begin{equation}
B[u^n](\delta u^n) = Y, 
\qquad \delta u^n \in \mathbb{C}^N, \quad Y \in \mathbb{C}^N,
\end{equation}
we convert the complex system into a real one
$\mathbb{C}^N \cong \mathbb{R}^{2N}$. 

Let
\[
\delta u^n = V_R + i V_I, 
\qquad 
V_R, V_I \in \mathbb{R}^N, \quad \text{and} \quad
V = \begin{pmatrix} V_R \\ V_I \end{pmatrix} \in \mathbb{R}^{2N}.
\]
Similarly, we define
\[
Y = Y_R + i Y_I \qquad 
Y_R, Y_I \in \mathbb{R}^N, \quad \text{and} \quad \widetilde Y = \begin{pmatrix} Y_R \\ Y_I \end{pmatrix} \in \mathbb{R}^{2N}.
\]
Finally, define the real-linear operator 
$\widetilde B [u^n]: \mathbb{R}^{2N} \to \mathbb{R}^{2N}$ by
\[
\widetilde B[u^n](V)
=
\begin{pmatrix}
\operatorname{Re}\big( B[u^n](V_R + i V_I) \big) \\
\operatorname{Im}\big( B[u^n](V_R + i V_I) \big)
\end{pmatrix}.
\]
The original complex system is thus equivalent to the real system
\begin{equation}
\widetilde B[u^n](V) = \widetilde Y,
\end{equation}
which is linear over $\mathbb{R}$ and has dimension $2N$. This allows the use of real GMRES in a matrix-free way, since only the action of the operator $\widetilde B[u^n]$ on a vector is required and can be performed via FFT, and the corresponding $2N\times 2N$ matrix is never assembled explicitly.

The action of the Jacobian operator is evaluated via
numerical quadrature without explicitly forming the 
dense matrix. This reduces memory usage and allows 
efficient solution of large-scale
nonlinear scattering problems.

\subsection{Formulation of Fourier transform of the potential $q$}\label{subsec: F(q)}
The scattering amplitude
 is defined by
\begin{equation}
A(k, \hat{\theta}, \theta;\eps)
=
\int_{\R^n}
e^{-ik \hat{\theta}\cdot y}
q(y)\, |u(y)|^2 u(y)\, dy.
\end{equation}
Once the field $u$ has been computed on the discretization grid, this integral is approximated by a discrete sum over the grid points containing the support of $q$ using the uniform rectangular quadrature rule associated with the Vainikko discretization.
The third derivative of $A(k, \hat{\theta}, \theta;\eps)$ with respect to $\varepsilon$ at $\varepsilon=0$ is calculated using a Savitzky-Golay filter (i.e., fitting a local polynomial to the data in a neighborhood of $\varepsilon=0$ and analytically differentiating the fitted polynomial \cite{savitzky1964smoothing}).

Let \(k_\ell \in [0,K_{\max}]\) denote the radial coordinate, and let
\(\phi_j,\varphi_j \in [0,2\pi)\) denote angular coordinates. Furthermore,
\(\xi_{r,x}\) and \(\xi_{r,y}\) denote the corresponding Cartesian components of the Fourier frequency vector. We choose \(N_k\) radii and \(N_\theta\) angles, resulting in a total of \(N_kN_\theta\) sampling points forming a polar grid in the Fourier domain. We consider three limited data measurement configurations. Based on the equation \eqref{third_linarization_amp}, we have:

\paragraph{Backscattering:}
In the case of backscattering $\hat\theta=-\theta$, and the third linearization gives
\begin{equation}
    D_\eps^3 A(k, -\theta, \theta;\eps)\big|_{\eps=0} = 6\,\mathcal F(q)(-2k\theta).
\end{equation}
The sampled frequencies are
\[
\xi = -2k\theta,
\]
and for isotropic resolution, we sample the frequency points \( \xi \) in polar coordinates, that is,
\begin{equation}
   \xi_r = -2  k_\ell \theta_j = -2  k_\ell ( \cos\varphi_j, \;  \sin\varphi_j) = (\xi_{r,x}, \xi_{r,y}).
\end{equation}

\paragraph{Fixed angle scattering, $\hat \theta$ fixed:} We fix the observation direction $\hat\theta= \hat \theta_0$. Then
\begin{equation}
    D_\eps^3 A(k, \hat \theta_0, \theta;\eps)\big|_{\eps=0} = 6\,\mathcal F(q)\bigl(k(\hat\theta_0-\theta)\bigr).
\end{equation}
The frequencies are given by
\[
\xi = k(\hat{\theta}_0-\theta),
\]
and we sample the frequency points \( \xi \) in polar coordinates as
\begin{equation}
   \xi_r = k_\ell (\hat \theta_0 - \theta_j) = k_\ell( \hat \theta_{0,1} - \cos\varphi_j, \; \hat \theta_{0,2} - \sin\varphi_j) = (\xi_{r,x}, \xi_{r,y}), 
\end{equation}

\paragraph{Fixed energy scattering, $k$ is fixed:} When the frequency $k = k_0$ is fixed, one obtains 
\begin{equation}
    D_\eps^3 A(k_0, \hat \theta, \theta;\eps)\big|_{\eps=0} = 6 \,\mathcal F(q)\bigl(k_0(\hat\theta-\theta)\bigr).
\end{equation}
Here the frequency points
\[
\xi = k_0(\hat{\theta}-\theta),
\]
cover a closed disk $\overline{B_{2k_0}}$, and we sample the frequency points \( \xi \) in polar coordinates
\begin{equation}
   \xi_r = k_0 (\hat \theta_j - \theta_j) = k_0 (\cos\phi_j - \cos\varphi_j, \; \sin\phi_j - \sin\varphi_j) = (\xi_{r,x}, \xi_{r,y}), 
\end{equation}

\subsection{Reconstruction of the potential $q$ from its Fourier data}\label{subsec: rec of q from F(q)}
Let $\Omega \subset \mathbb{R}^2$ be partitioned into rectangular cells 
$p_{m,n}$ with
\begin{equation*}
(m,n)\in\{0,\dots,N_x-1\}\times\{0,\dots,N_y-1\}. 
\end{equation*}

We approximate a complex-valued function 
$g : \Omega \to \mathbb{C}$ by a piecewise constant expansion
\begin{equation}
g(X) = \sum_{m,n} g_{m,n}\,\varphi_{m,n}(X),
\end{equation}
where the basis functions $\varphi_{m,n}$ are defined as
\begin{equation}
\varphi_{m,n}(X) =
\begin{cases}
1, & X \in p_{m,n}, \\
0, & \text{otherwise}.
\end{cases}
\end{equation}

To compute the Fourier transform of \( g \), we evaluate the contribution of each cell \( p_{m,n} \) at a discrete set of frequencies \( \xi_r  \in \mathbb{R}^2 \), resulting in the expression
\[
\widehat{g}(\xi_r)=\int_{p_{m,n}} g(X)\, \mathrm{e}^{ -\ii \xi_r \cdot X} \, \mathrm{d}X 
= \sum_{m,n}g_{m,n} \int_{p_{m,n}} \mathrm{e}^{ -\ii \xi_r \cdot X} \, \mathrm{d}X.
\]

Let
\[
c_{m,x} = x_m + \frac{h}{2},
\qquad
c_{n,y} = y_n + \frac{h}{2}
\]
denote the cell center.

We next build the matrix \( E \in \mathbb{C}^{A \times N} \), where $N = N_x  N_y$ and $A = N_k  N_\theta$, and whose entries \( E_{(A_x, A_y),(m,n)} \) represent the contribution of the cell \( p_{m,n} \) to the Fourier coefficient associated with frequency \( \xi_r \), so the entries of Fourier matrix
\begin{equation*}
\begin{aligned}
E_{(A_x, A_y),(m,n)}=h^2\sinc(\frac12\xi_{r,x}h)\sinc(\frac12\xi_{r,y}h)e^{-i(\xi_{r,x}c_{m,x} + \xi_{r,y}c_{n,y})}
\end{aligned}
\end{equation*}
In matrix form:
\begin{equation} \label{eq:to inverse}
  \mathbf d = E \mathbf q,  
\end{equation}
where \(\mathbf q\in\mathbb R^{N}\) denotes the vector of discretized values of the potential $q$ on the reconstruction grid, and \(\mathbf d\) denotes the vector of Fourier samples obtained from the third-order linearization of the scattering amplitude.

A regularized solution to \eqref{eq:to inverse} is then found using Tikhonov or TV regularization (Chambolle-Pock primal-dual algorithm) depending on the smoothness of the potential.

\begin{remark}
The choice of the Fourier matrix formulation is motivated by the flexibility required to treat different measurement configurations, such as backscattering, fixed-angle, and fixed-energy data, as well as situations where the number of Fourier samples differs from the number of reconstruction unknowns. The matrix formulation naturally accommodates limited and irregular frequency sampling patterns and allows regularization techniques, such as Tikhonov or TV regularization, to be incorporated directly into the inversion procedure. While FFT-based methods typically rely on Cartesian frequency grids, nonuniform FFT (NUFFT) techniques could also be considered for irregular sampling geometries.
\end{remark}

\section{Numerical examples}\label{sec: numerical_examples}

All numerical experiments are performed in two spatial dimensions on the square domain
\[
\Omega = [-L,L]^2, \qquad L = 2\,R,
\]
where $R$ denotes the width of a box containing the support of $q$.
The domain is discretized using a uniform Cartesian grid. In the numerical experiments below we take $M=8$ (so that $N=2^{M-1}=128$) as described in ~\ref{subsec: limmmann-schwinger}.

To consider measurement uncertainty, we corrupt the scattering amplitude by additive complex Gaussian noise and define
\[
A_{\mathrm{noisy}}(k,\hat \theta,\theta;\eps)
= A(k,\hat \theta,\theta;\eps) + \sigma\big(\eta_1 + i\,\eta_2\big),
\]
where $\eta_1,\eta_2\sim \mathcal N(0,1)$ are independent standard normal random variables.
The noise amplitude is chosen proportional to the maximal signal magnitude,
\[
\sigma = \rho \,\max_{\varepsilon_k}\big|A(k,\hat \theta,\theta;\eps)\big|,
\qquad 
u_{\inc}(x)=\varepsilon_k e^{ik\theta\cdot x},
\]
where $\rho>0$ is the prescribed relative noise level. In all examples below we set $\rho=0.02$, corresponding to
$2\%$ relative complex Gaussian noise, which yields an average signal-to-noise ratio of approximately $23$~dB.

The third derivative of the amplitude with respect to the perturbation parameter \( \varepsilon \) is approximated using a Savitzky–Golay polynomial filter with \( N_\varepsilon = 40 \) sampling points uniformly distributed in \( \varepsilon \in [-0.05, 0.05 ] \) and a smoothing window of size \(25\). 

Fourier measurements are sampled on a polar grid, as described in Section~\ref{subsec: F(q)}. In all experiments, we use \(N_r = 40\) radial samples and \(N_\theta = 40\) angular directions. In the backscattering setting, the maximum radius is chosen as $R_{\max} = 15$. In the fixed-angle setting, we take $R_{\max} = 30$ and
\[
\theta_0 = 
\frac{(5,1)}{\sqrt{26}}\in\mathbb{S}^1,\quad
\]
while in the fixed-energy setting, we set \(k_0 = 15\). 

The regularization parameters for Tikhonov and TV regularization were chosen experimentally and are specified for each example below.

\paragraph{Example 1.}
For the backscattering case, we consider the potential $q_1$ defined as the sum of two smooth, compactly supported
elliptic bumps with radius $R=0.4$. The first bump is centered at $(0.4,0.4)$ with orientation angle $\alpha=\frac{\pi}{4}$. The second bump is centered at
$(-0.5,-0.4)$ with orientation
$\alpha=\frac{\pi}{2}$, and amplitude times $\frac{2}{3}$.

With this potential, we use Tikhonov regularization and take the 
regularization parameter $\lambda = 10^{-4}$.

The results are shown in Figure~\ref{fig:two-bump}, and reflect a good reconstruction of the two smooth ellipses, even with different amplitude.

\paragraph{Example 2.}
In the fixed-angle scattering configuration, we consider a potential supported in a kite-shaped domain. The shape is then rotated by an angle \(\alpha\). 

The potential is defined as the indicator function of the interior of this rotated kite-shaped domain \(D\), namely
\[
q_2(x, y)=
\begin{cases}
1 & \text{if } (x,y) \in D\\[1mm]
0 & \text{otherwise},
\end{cases}
\]
 In the numerical experiment, we take \(R=0.4\), and \(\alpha=\frac{\pi}{4}\).

In Figure~\ref{fig:kitte-shape}, the reconstruction obtained with TV regularization and with $\lambda = 10^{-2}$, closely captures the kite-shape.

 \paragraph{Example 3.}
For the fixed-energy scattering case, we consider a \(K\)-shaped potential of the form
\[
q_3(x, y)=\begin{cases}
1 & \text{if } (x,y) \in K\\[1mm]
0 & \text{otherwise},
\end{cases} 
\]
where \(K\subset\mathbb R^2\) is a rotated and translated K-shaped domain.

Figure~\ref{fig:k-shape} demonstrates a good reconstruction of the K-shaped potential with $\lambda = 10^{-1}$ for TV regularization.

 \paragraph{Example 4.} We also consider a flower-shaped potential in the case of fixed-energy scattering. The support is described in local polar coordinates \((\rho,\theta)\) by
\[
W=
\left\{
\beta R \le \rho \le R\bigl(1+a\cos(5\theta)\bigr)
\right\},
\]
with \(a=0.28\) and \(\beta=0.35\). 

\[
q_4(x, y)=
\begin{cases}
1 & \text{if } (x,y) \in W\\[1mm]
0 & \text{otherwise},
\end{cases}
\]

Thus, the outer boundary is a five-lobed star-shaped curve, while the interior contains a circular hole of radius \(0.35R\) with $R=0.6$.

Figure~\ref{fig:flower-shape} represents the result for the reconstruction of the flower-shape, for this example, we use TV regularization with $\lambda = 10^{-1}$, which captures closely the sharp contour.

\begin{figure}[H]
  \centering
  \includegraphics[width=0.95\textwidth]{job_2476558_potential.png}
  \caption{Example 1, backscattering case with a potential $q$ given by two elliptic bumps: reconstruction (left), ground truth (middle), and residual (right). The position and magnitude of the ellipses are well reconstructed, with an $L^2$ error of $0.057$.}
  \label{fig:two-bump}
\end{figure}

\begin{figure}[H]
  \centering
  \includegraphics[width=0.95\textwidth]{job_2476256_potential.png}
  \caption{Example 2, kite-shape potential for the fixed-angle scattering case: reconstruction (left), ground truth (middle), and residual (right). The reconstruction presents some artifacts near the jump discontinuities, but the overall kite shape is well recovered , with an $L^2$ error of $0.298$.}
  \label{fig:kitte-shape}
\end{figure}

\begin{figure}[H]
  \centering
  \includegraphics[width=0.95\textwidth]{job_2476309_potential.png}
  \caption{Example 3, fixed-energy scattering case with an oriented K-shaped potential: reconstruction (left), ground truth (middle), and residual (right). The TV regularization recovers the overall K-shape well, with larger errors near the sharp edges and an $L^2$ error of $0.212$.}
  \label{fig:k-shape}
\end{figure}

\begin{figure}[H]
  \centering
  \includegraphics[width=0.95\textwidth]{job_2476310_potential.png}
  \caption{Example 4, fixed-energy scattering with a discontinuous flower-shaped potential: reconstruction (left), ground truth (middle), and residual (right). Both the location and amplitude are well recovered, with larger errors near the sharp edges, and an $L^2$ error of $0.213$.}
  \label{fig:flower-shape}
\end{figure}

\section*{Acknowledgements}

V.P., K.E-M., and T.T. received funding from the Finnish Ministry of Education and Culture through the Doctoral Programmes Pilot (Mathematics of Sensing, Imaging and Modelling project), the Research Council of Finland through the Advanced Mathematics for Sensing, Imaging and Modelling Flagship programme (grant 359186), and the Emil Aaltonen Foundation. T.L. was partly supported by the Research Council of Finland
(Centre of Excellence in Inverse Modelling and Imaging and FAME Flagship, grants 353091 and 359208).   M.L. was supported by the European Research Council under Advanced Grant 101097198, the Research Council of Finland Centre of Excellence, and the Research Council of Finland FAME Flagship programme (grant 359182). The views and opinions expressed in this paper are only those of the authors and do not necessarily represent those of the European Union or any of the funding bodies mentioned above. Neither the European Union nor any of the other funding organizations can be held responsible for them.

We would like to thank Andreas Hauptmann, Marko Huhtanen, and Marvin Kn\"oller for valuable 
discussions and suggestions that improved this work.

\bibliographystyle{abbrv}
\bibliography{nonlinear}

\noindent{\footnotesize E-mail addresses:\\
Khaoula El Maddah: khaoula.elmaddah@oulu.fi\\
Matti Lassas: matti.lassas@helsinki.fi\\
Tony Liimatainen: tony.liimatainen@helsinki.fi\\
Valter Pohjola: valter.pohjola@gmail.com\\
Teemu Tyni: teemu.tyni@oulu.fi
}

\end{document}